\def\beq{\begin{equation}}
\def\eeq{\end{equation}}\usepackage{url}
\newtheorem{lem}{Lemma}
\newtheorem{thm}[lem]{Theorem}
\newtheorem{prp}[lem]{Proposition}
\theoremstyle{definition}
\newtheorem{rem}{Remark}
\def\ra{\rightarrow}
\def\beqa{\begin{eqnarray}}
\def\eeqa{\end{eqnarray}}
\def\beqa{\begin{eqnarray}}
\def\eeqa{\end{eqnarray}}
\newtheorem{remark}{Remark}
\DeclareMathOperator{\GL}{GL}
\DeclareMathOperator{\SL}{SL}
\DeclareMathOperator{\SO}{SO}
\def\R{\mathbb{R}}
\def\Z{\mathbb{Z}}
\begin{document}
\title[Flat cycles in the homology of $\Gamma\setminus \SL_m\mathbb R/\SO(m)$]
{Flat cycles in the homology of $\Gamma\setminus \SL_m\mathbb R/\SO(m)$}
\author{Grigori Avramidi, T. T$\hat{\mathrm{a}}$m Nguy$\tilde{\hat{\mathrm{e}}}$n-Phan}
\address{Dept. of Mathematics\\
5734 S. University Avenue\\
Chicago, Illinois 60637}
\email[G.~Avramidi]{gavramid@math.uchicago.edu}
\address{Dept. of Mathematics\\
5734 S. University Avenue\\
Chicago, Illinois 60637}
\email[T. Tam ~Nguyen-Phan]{ttamnp@math.uchicago.edu}
%



\begin{abstract} 
In this paper we show that flat ($m-1$)-dimensional tori give nontrivial rational homology cycles in congruence covers of the locally symmetric space $\SL_m\mathbb Z\setminus \SL_m\mathbb R/\SO(m).$ We also show that the dimension of the 
subspace of $H_{m-1}(\Gamma\setminus \SL_m\mathbb R/\SO(m);\mathbb Q)$ spanned by flat $(m-1)$-tori grows
as one goes up in congruence covers. 

\end{abstract}
\maketitle
\section{Introduction}
Let $M$ be a finite volume, nonpositively curved, locally symmetric manifold.
It is usually difficult to determine the homology of such a manifold. However, 
totally geodesic submanifolds $N$ are natural candidates for non-trivial homology cycles. 
In this paper we study the case where $N$ is a maximal periodic torus of $M$. 
That is $N$ is a compact, totally geodesic, immersed torus 
whose dimension is equal to the geometric (i.e. real) rank of $M$. 
Prasad and Raghunathan have shown \cite{prasadraghunathan} that a locally symmetric space always contains such tori,
while Pettet and Souto have shown that these tori are ``stuck''
in the thick part of the locally symmetric space and cannot be homotoped 
out to the end \cite{pettetsouto}. This leads one to suspect that such tori
might be homologically nontrivial in a strong sense. The main goal of this 
paper is to justify such suspicions 
in the special case when $\Gamma<\SL_m\Z$ is a finite index
torsionfree subgroup and $M=\Gamma\backslash\SL_m\R/\SO(m)$ is the corresponding locally
symmetric space. In this case, maximal periodic tori can be obtained in the following concrete way. 

Let $\tau\in \SL_m\mathbb Q$ be an element whose
characteristic polynomial 
is irreducible and has $m$ distinct real eigenvalues. 
The minset\footnote{The element $\tau$ is a semisimple isometry of the symmetric space $H:=\SL(\mathbb R^m)/\SO(\mathbb R^m)$. The {\it minset} of $\tau$ is the set of points $\{x\in H\mid d_H(x,\tau x)\leq d_H(y,\tau y) \mbox{ for all } y\in H\}$ that are moved the least by $\tau$.} of $\tau$ acting on $H:=\SL_m\mathbb R/\SO(m)$ is a totally geodesic
($m-1$)-dimensional flat $X$ whose image in the quotient space $\SL_m\mathbb Z\setminus \SL_m\mathbb R/\SO(m)$ is 
an isometrically immersed ($m-1$)-dimensional torus. 
We show that such tori yield interesting homology cycles in finite
volume quotients of $H$.  
\begin{thm}
Let $X$ be an $(m-1)$-dimensional flat whose image in $M:=\SL_m\Z\backslash \SL_m\R/\SO(m)$ is compact. 
Then, there is a finite cover $M'$ of $M$ such that the image of $X$ in $M'$ is a non-trivial homology cycle in $H_{m-1}(M';\mathbb Q).$
\end{thm}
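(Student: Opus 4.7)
My plan is to establish non-triviality of $[X]$ in a suitable congruence cover via intersection theory with a complementary-dimensional geometric cycle. Since $\dim X = m-1$ and $\dim M = (m-1)(m+2)/2$, the dual should have dimension $m(m-1)/2 = \dim \SO(m)$. The element $\tau$ has irreducible characteristic polynomial, so its centralizer $T_\tau \subset \SL_m$ is a $\Q$-anisotropic maximal torus; by Dirichlet's unit theorem $T_\tau(\Z)$ contains a rank-$(m-1)$ lattice $\Lambda$ acting freely and cocompactly on $X$, so after replacing $\SL_m\Z$ by a torsion-free finite-index subgroup $\Gamma$, the image $T = \Lambda\backslash X$ is a compact totally geodesic immersed $(m-1)$-torus in $M$.

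For the dual cycle I take, after conjugating so that $X$ passes through the basepoint $eK$ (up to $\Gamma$-translation, which does not change $[T]$), the orbit $N \cdot eK$, where $N$ is the unipotent radical of a minimal $\Q$-parabolic of $\SL_m$. Although $N\cdot eK$ is non-compact in $H$, its image $Z$ in $M$ is a compact nilmanifold of dimension $m(m-1)/2$, because $N \cap \SL_m\Z$ is a cocompact lattice in $N$ (for the standard upper triangular $N$, this is the integer Heisenberg-type group). At $eK$ the projection of $\mathfrak{n}$ onto $\mathfrak{p}$ spans the off-diagonal symmetric traceless matrices, which form the transverse complement to $T_{eK}X = \mathfrak{a}$, giving a transverse intersection of $X$ and $N\cdot eK$ at $eK$.

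The signed intersection $[T]\cdot [Z]$ in $M$ is then well-defined but may vanish from extra $\Gamma$-orbits of intersections cancelling. I correct this by passing to a principal congruence subgroup $\Gamma' = \Gamma(q) \triangleleft \Gamma$ of sufficient level; by strong approximation for $\SL_m$ applied at primes dividing $q$, chosen compatibly with the factorization of the minimal polynomial of $\tau$, distinct $\Gamma$-orbits of intersection points of (translates of) $X$ with $N\cdot eK$ become distinct $\Gamma'$-orbits in $M' = \Gamma'\backslash H$. One then arranges that a chosen lift of $T$ in $M'$ meets a chosen lift of $Z$ in a single transverse point (or more generally a finite set with non-cancelling signs), giving $[T']\cdot [Z']\ne 0$ in $H_0(M';\Q)$. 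Poincar\'e--Lefschetz duality for the Borel--Serre compactification of $M'$ then forces $[T']\ne 0$ in $H_{m-1}(M';\Q)$.

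The main obstacle is the congruence-separation and sign control: local transversality at the basepoint is immediate from the Lie-algebraic decomposition $\mathfrak{p} = \mathfrak{a}\oplus \mathfrak{p}_{\mathrm{off}}$, but ensuring the global signed intersection is non-zero in $M'$ requires combining strong approximation for $\SL_m$ with a careful analysis using the arithmetic of the number field $K = \Q[\tau]$. A subtler potential issue is whether $[Z]$ itself is non-zero in $H_*(M';\Q)$ (a question tied to Eisenstein cohomology of $\Gamma'$), which may fail for small covers; if so, one reformulates the pairing using $[Z]$ as a relative cycle in $H_*(\bar M',\partial\bar M';\Q)$ paired via Lefschetz duality with $H^{m-1}(M';\Q)$, and then $[T']\cdot [Z']\ne 0$ still suffices.
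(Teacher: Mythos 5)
Your proposal shares the paper's broad strategy (exhibit a complementary-dimensional geometric cycle that meets the flat transversally, then pass to a congruence cover to control signs), but the dual cycle you choose is different and creates difficulties that the paper's choice is specifically designed to avoid. The paper takes $Y$ to be the minset of a rational involution $\rho$ with eigenvalues $(-1,\dots,-1,1)$; this $Y$ is a totally geodesic symmetric subspace $(\SL_{m-1}\R/\SO(m-1))\times\R$ defined over $\Q$. Because $Y$ is totally geodesic, both $X$ and $Y$ have well-behaved spheres at infinity described by flag-eigenvalue pairs, and ``$X$ meets $Y$'' is exactly ``$\partial X$ links $\partial Y$'' in $S(\R^m)$. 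The paper's Proposition~\ref{projective2} converts this to a concrete condition in $\mathbb P^{m-1}$, and the congruence step is then run through a careful analysis of double cosets in $C_\rho(\Z_p)C_\tau(\Z_p)$. Your nilmanifold orbit $Z=N\cdot eK$ is not totally geodesic, its behaviour at infinity is degenerate (it collapses to a face of $\partial_\infty H$ rather than spanning a sphere at infinity), and none of this linking machinery is available; you would need to develop a replacement.

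There are two concrete gaps. First, the move ``after conjugating so that $X$ passes through the basepoint $eK$'' is incompatible with keeping both $\tau$ and $N$ rational. The centralizer torus $T_\tau$ is $\Q$-anisotropic, so it is not conjugate within $\SL_m\Q$ (let alone within $\Gamma$) to the split diagonal torus whose unipotent radical is the standard upper-triangular $N$. A $\Gamma$-translation does not, in general, put $X$ through $eK$; an arbitrary $\SL_m\R$-conjugation does, but then either $\tau$ or $N$ loses its $\Q$-structure, and you need $N\cap\Gamma$ cocompact in $N$ for the image $Z$ to be compact. So the Lie-algebra transversality computation at $eK$ cannot even be set up without losing the arithmetic you need later. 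Second, the crucial step---showing that after passing to $\Gamma(q)$ the remaining $\Gamma(q)$-translates of $X$ that meet $Z$ all contribute intersection numbers of the same sign---is asserted via ``strong approximation \dots chosen compatibly with the factorization of the minimal polynomial of $\tau$'' with no actual argument. This is precisely where the paper does its real work (Theorem~\ref{covers}, via Lemmas~\ref{ptoq} and~\ref{orient}): one must decompose the double cosets $\Gamma_Y\gamma\Gamma_X$, use that $C_\rho(\R)\cap C_\tau(\R)=1$ to factor elements uniquely into commuting pieces, descend that factorization from $\Z_p$ to $\Q$, and track orientations on both $X$ and $Y$ through the congruence tower. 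None of this is present in your sketch, and it is not clear the analogue exists for $Z=N\cdot eK$, since $N$ does not centralize $\tau$ and the relevant double coset structure is different. Your closing remark that one could retreat to pairing $[Z]$ as a relative cycle is the right instinct (the paper's $Y$ is indeed noncompact and lives in closed-support homology), but it does not repair either gap above.
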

The key ideas of the proof of this theorem are the following.
\begin{itemize}
\item[1)] First we find a totally geodesic copy $Y$ of $(\SL_{m-1}\mathbb R/\SO(m-1))\times\R$ in $H$ 
which is defined over $\mathbb Q$ and intersects the flat $X$ transversally (not necessarily orthogonally) in a single point. 
This reduces to showing that the boundaries at infinity of $X$ and $Y$ are linked. 
\item[2)] Then we find a finite index subgroup $\Gamma\leq\SL_m\mathbb Z$ such that the images of $X$ and $Y$ are embedded orientable submanifolds $\overline X$ and $\overline Y$ of $\Gamma\setminus \SL_m\mathbb R/\SO(m)$ intersecting transversally, with all intersection points having the same sign.
\end{itemize}
This means the signed intersection number $\overline X\cap\overline Y$ is non-zero. Since this number does not change when we replace the cycle $\overline X$ by a homologous cycle\footnote{More formally, transverse intersections between compact cycles and closed cycles make sense on the level of homology and give a map $H_{m-1}\times H^{cl}_{m(m-1)/2}\stackrel{\cap}\ra H_0.$}, we conclude that $\overline X$ is a non-trivial homology cycle in $H_{m-1}(\Gamma\setminus \SL_m\mathbb R/\SO(m);\mathbb Q)$.
Similarly, $\overline Y$ is a non-trivial cycle in homology with closed supports $H^{cl}_{m(m-1)/2}(\Gamma\setminus \SL_m\mathbb R/\SO(m);\mathbb Q)$. 

Let $\Gamma$ be a finite index torsion free subgroup of $\SL_m\mathbb Z$ and $\Gamma(p^n):=\Gamma\cap\ker(\SL_m\mathbb Z\ra \SL_m(\mathbb Z/p^n))$ the $p^n$ congruence subgroup. 
The argument sketched above can be generalized to one that uses multiple flats. We prove the following theorem. It shows that 
the subspace of homology generated by flat tori grows as one goes up in congruence covers.
\begin{thm}\label{maintheorem}
Given a prime $p$ and an integer $N$, there is $n_0$ such that for $n\geq n_0$,  the dimension of the subspace of 
$H_{m-1}(\Gamma(p^n)\setminus H;\mathbb Q)$ spanned by flat cycles is $\geq N.$
\end{thm}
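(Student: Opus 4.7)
The plan is to generalize the intersection-theoretic argument of the previous theorem from a single flat to a family of $N$ flats, producing a non-degenerate intersection matrix inside a deep enough congruence cover $M_n := \Gamma(p^n)\backslash H$.

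First I would pick $N$ rational elements $\tau_1,\ldots,\tau_N \in \SL_m\mathbb Q$ with irreducible characteristic polynomials, $m$ distinct real eigenvalues, and pairwise distinct $\SL_m\mathbb Q$-conjugacy classes (such a family is easily produced by varying the characteristic polynomial). Each $\tau_i$ yields a compact flat $\bar X_i$ in $\Gamma\backslash H$, and the construction of the previous theorem applied to $\tau_i$ yields a totally geodesic cycle $Y_i \cong (\SL_{m-1}\mathbb R/\SO(m-1))\times\mathbb R$, defined over $\mathbb Q$, that meets $X_i$ transversally at a single point of $H$. In particular, the diagonal intersection number $\bar X_i \cdot \bar Y_i$ can be made nonzero in a sufficiently deep finite cover, just as in the single-flat case.

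Next, pass to $M_n$. The preimage of each $\bar X_i$ in $M_n$ is a union of embedded flat tori, indexed by $\Gamma(p^n)\backslash\Gamma/\Stab_\Gamma(X_i)$, and the number of such lifts grows polynomially in $p^n$ (since $\Stab_\Gamma(X_i)$ is virtually $\mathbb Z^{m-1}$, while $[\Gamma : \Gamma(p^n)]$ grows like $p^{n(m^2-1)}$); similarly for $\bar Y_j$. For $i \ne j$ the off-diagonal intersection $\bar X_i \cap \bar Y_j$ is a \emph{finite} set in $\Gamma\backslash H$, since $X_i$ and $Y_j$ have complementary dimensions in $H$. The heart of the argument is then the following separation lemma: for $n \ge n_0(N)$, one can choose specific lifts $\tilde X_i$ of $\bar X_i$ and $\tilde Y_i$ of $\bar Y_i$ in $M_n$ such that (i) each $\tilde X_i$ is embedded and orientable, (ii) $\tilde X_i \cdot \tilde Y_i \ne 0$ with all intersection signs aligned as in the previous theorem, and (iii) $\tilde X_i \cap \tilde Y_j = \emptyset$ whenever $i \ne j$. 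Granted this lemma, the intersection matrix $(\tilde X_i \cdot \tilde Y_j)_{i,j}$ is diagonal with nonzero entries, so the classes $[\tilde X_1],\ldots,[\tilde X_N] \in H_{m-1}(M_n;\mathbb Q)$ are linearly independent, proving the theorem.

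The main obstacle is clause (iii) of the separation lemma. The underlying count is that, for each pair $i \ne j$, the number of coset pairs in $\Gamma(p^n)\backslash\Gamma/\Stab_\Gamma(X_i) \times \Gamma(p^n)\backslash\Gamma/\Stab_\Gamma(Y_j)$ whose corresponding lifts in $M_n$ actually intersect is controlled by $|\bar X_i \cap \bar Y_j|$ times certain stabilizer-quotient factors, and this grows strictly slower than the total pool of coset pairs as $n\to\infty$. A pigeonhole argument iterated over $k = 1,\ldots,N$ then supplies valid $\tilde X_k$ and $\tilde Y_k$ at each step: after fixing $\tilde X_1,\ldots,\tilde X_{k-1}$ and $\tilde Y_1,\ldots,\tilde Y_{k-1}$, one must avoid at most $2(k-1)$ "forbidden" collections of lifts, and for $n$ sufficiently large these collections occupy a vanishing fraction of the total pool. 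The delicacy is in carrying out this simultaneous selection while also maintaining conditions (i) and (ii), which may force $n_0$ to be chosen large enough that the orientation and signed-intersection properties from the previous theorem hold simultaneously for all $N$ diagonal pairs.
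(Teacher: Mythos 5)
Your approach is genuinely different from the paper's, and the crucial step is left as an unproved ``separation lemma.'' The paper does something cleaner: using the linking criterion (Proposition \ref{projective2}) and a density argument for rational flats and rational line--hyperplane pairs, it constructs flats $X_1,\dots,X_N$ and complements $Y_1,\dots,Y_N$ \emph{in the universal cover} $H$ so that $X_i$ meets $Y_j$ transversally in a single point when $i\le j$ and $X_i\cap Y_j=\emptyset$ when $i>j$ (Proposition \ref{rationalintersectionpattern}). Then Theorem \ref{covers} is invoked \emph{uniformly} over this arrangement: for $n\gg 0$ the images $X_i/\Gamma_{X_i}(p^n)$ and $Y_j/\Gamma_{Y_j}(p^n)$ are embedded, orientable, have all intersections of the same sign for $i\le j$, and are disjoint for $i>j$. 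This gives an upper-triangular intersection matrix with nonzero diagonal, hence a nondegenerate one. No pigeonhole, no choosing among lifts.

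By contrast, you try to produce a \emph{diagonal} matrix by counting components of preimages in $M_n$ and selecting favourable lifts, without controlling $X_i\cap Y_j$ in $H$ at all. This has a real gap. First, you cannot freely choose $\tilde X_i$, $\tilde Y_i$: the sign-alignment on the diagonal comes from Theorem \ref{covers}, which is a statement about the specific quotients $X_i/\Gamma_{X_i}(p^n)$ and $Y_i/\Gamma_{Y_i}(p^n)$, i.e.\ a particular component of each preimage. Once $n$ is fixed, this pins down $\tilde X_i$ and $\tilde Y_i$ (up to simultaneous conjugation), and your pigeonhole freedom evaporates exactly where you need it for clause (iii). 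Second, even granting the freedom, the heuristic ``intersecting lift-pairs are a vanishing fraction of all lift-pairs'' has to be run \emph{after conditioning on the rare event} that $\tilde X_i\cap\tilde Y_i\neq\emptyset$; you give no argument for why the conditional count still lets you dodge all $O(N^2)$ off-diagonal intersections. Finally, one does not need a diagonal matrix -- upper triangular suffices -- and insisting on diagonal makes the problem harder than it needs to be. The essential content you omitted is the construction of the intersection pattern in $H$ via the linking lemma; without it there is no reason a set of disjoint-in-the-cover lifts exists, and the counting heuristics do not substitute for that geometric input.
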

\begin{rem}
This also implies nonvanishing for homology in dimensions other than $m-1$. For instance, if $m=3$ then
$\Gamma(p^n)\setminus H$ is homotopy equivalent to a $3$-complex, $b_1(\Gamma(p^n))=0$ 
by the normal subgroup theorem and $\chi(\Gamma(p^n))=0,$ hence $b_3(\Gamma(p^n))=1+b_2(\Gamma(p^n))$ grows
as one goes up in congruence covers.
\end{rem}

\subsection*{Related work}
There is a large and fruitful literature on homology of locally symmetric spaces obtained from totally geodesic submanifolds. Examples are \cite{millson, millsonraghunathan, schwermer, rohlfsschwermer, leeschwermer}. The idea of eliminating unwanted intersections by passing to congruence covers appears in some form in all these works. However, as far as we can tell, the homology studied in those papers comes from cycles which are the fixed point sets of a finite order rational isometry $\sigma$. These cycles are called \emph{special cycles}. One finds another finite order rational isometry $\sigma'$ commuting with $\sigma$ and then intersects the fixed point sets. The resulting fixed point sets intersect orthogonally.

The flat $\mathbb T^{m-1}$-cycles considered in this paper are not of this type. They are not fixed by any finite order isometry. (The flat $X=\mathbb R^{m-1}$ in the universal cover is the fixed set of an abelian group of involutions $(\mathbb Z/2)^m,$ but these involutions are not rational and do not descend to a finite cover.) Further, our complementary subspaces $Y$ do not need to intersect $X$ orthogonally. This gives flexibility in the choice of $Y$ and allows us to find appropriate intersection patterns in the universal cover via a density argument. 

The rationally defined subspaces $Y$ which we intersect with the flats $X$ are more familiar.
For instance, rational copies of $\mathbb H^2\times\mathbb R$ in $\SL_3\mathbb R/\SO(3)$ 
have been studied in \cite{leeschwermer,ash}. 


\subsection*{Acknowledgements}
We would like to thank Juan Souto for asking the first author whether maximal periodic flats in locally symmetric spaces give interesting homology and for suggesting
that intersections might be resolved by passing to finite covers, Wouter van Limbeek for pointing out that
$\GL_2\R$ is not the same thing as $\SL_2\R\times\R$ (making the paper significantly longer), and Cesar Lozano for explaining complex enumerative geometry (which motivated Proposition \ref{projective2}).
\section{The $\SL_3$ case}
In this section we we describe some intersections in the symmetric space $\SL_3\mathbb R/\SO(3)$ in terms of linking on its sphere at infinity and give a ``projective plane'' description of when linking occurs. For the formal argument it is not really necessary to single out the $\SL_3$ case from the general $\SL_m$ case, but in practice the $\SL_3$ case is easier to visualize, so we do it separately and illustrate the argument with some pictures. 
We will describe the sphere at infinity in terms
of flag-eigenvalue pairs in $\mathbb R^3$ (section 2.13.8 in \cite{eberlein}). 
This will allow us to compute intersections of totally geodesic submanifolds $X$ and $Y$ in $\SL_3\mathbb R/\SO(3)$
purely in terms of the sphere at infinity. 

\subsection*{A description of the sphere at infinity $S^4=\partial(\SL_3\mathbb R/\SO(3))$}

The points at infinity correspond to geodesic rays $e^{tZ}$ where $Z$
is a trace zero, symmetric matrix of length one. That is, the eigenvalues $\lambda_1\geq\lambda_2\geq\lambda_3$ arranged
in descending order satisfy
\begin{eqnarray*}
\lambda_1+\lambda_2+\lambda_3&=&0,\\
\lambda_1^2+\lambda_2^2+\lambda_3^2&=&1.
\end{eqnarray*}
Let $E_i=\{v\in\mathbb R^3\mid Zv=\lambda_iv\}$ be the $\lambda_i$-eigenspace of $Z$.
Then, the eigenvalues together with the flag 
$$
0\subset E_1\subset E_1+E_2\subset\mathbb R^3
$$
provide enough information to recover the symmetric matrix $Z.$
This description of the sphere $S^4$ at infinity naturally subdivides it into 
three pieces.
\begin{itemize}
\item A four dimensional piece 
$\lambda_1>\lambda_2>\lambda_3$ and $0\subset E_1\subset E_1+E_2\subset\mathbb R^3$ isomorphic $\mathbb P^3\times(1/\sqrt 6,2/\sqrt 6),$
\item a two dimensional piece $\lambda_1=2/\sqrt6,\lambda_2=\lambda_3=-1/\sqrt 6,$ and $0\subset E_1\subset\mathbb R^3$ isomorphic to 
the real\footnote{All projective spaces in this paper are real} projective plane $\mathbb P_1^2$ of lines through the origin in $\mathbb R^3,$
and
\item a two-dimensional piece $\lambda_1=\lambda_2=1/\sqrt 6,\lambda_3=-2/\sqrt 6,$ and $0\subset E_1+E_2\subset\mathbb R^3$
isomorphic to the projective plane $\mathbb P_2^2$ of planes through the origin in $\mathbb R^3$.
\end{itemize}

\subsection*{The circle $S^1\subset S^4$ corresponding to a flat $\mathbb R^2\subset \SL_3\mathbb R/\SO(3)$}
A two-dimensional flat corresponds to a transverse collection of one-dimensional subspaces $A,B,C\subset\mathbb R^3.$
Running through the possible eigenvalues gives the circle. 

\subsection*{Subgroups $\SL_2\mathbb R\times\mathbb R\subset \SL_3\mathbb R$}
A subgroup $\SL_2\mathbb R\times\mathbb R\subset \SL_3\mathbb R$ corresponds to a pair $(L,P)$
where $L$ is a one-dimensional subspace of $\mathbb R^3$, $P$ is a two-dimensional subspace of $\mathbb R^3,$
and $L$ and $P$ are transverse to each other. 
The {\it flags associated to $(L,P)$} are those that occur on the sphere at infinity of the 
corresponding $\SL_2\mathbb R\times\mathbb R.$ The possible $1$-dimensional subspaces in such
a flag are $L$ or a one-dimensional subspace $L_0\subset P.$ The possible $2$-dimensional subspaces
are $P$ or any two-dimensional subspace $P_0$ containing $L$. The possible flags are nested sequences of these.

Suppose that we have two such subgroups corresponding to pairs $(L,P)$ and $(L',P'),$
and denote the corresponding two-spheres at infinity by $S$ and $S'$ in $S^4$. We describe the points of intersection $S\cap S'.$ 

\subsection*{The intersection $S\cap S'$ in $S^4$}
We say that the pairs $(L,P)$ and $(L',P')$ are in general position if 
$L\not=L',
P\not=P',
L\pitchfork P',
L'\pitchfork P,$ and
$(P\cap P')\pitchfork(L+L').$
\begin{prp}
If the pairs $(L,P)$ and $(L',P')$ are in general position, then the corresponding
spheres $S$ and $S'$ intersect twice. They have no intersections in $\mathbb P^3\times(1/\sqrt 6,2/\sqrt 6)$,
intersect once at 
$P\cap P'\in\mathbb P_1^2$ and once at 
$L+L'\in\mathbb P_2^2.$  
\end{prp}
\begin{proof}
Since $(L,P)$ and $(L',P')$ are in general position, 
the only flags they share are $0\subset P\cap P'\subset\mathbb R^3$ and $0\subset L+L'\subset\mathbb R^3.$
\end{proof}
\subsection*{Transverse intersections}
Let $p$ be a point of intersection of $S$ and $S'$ in $S^4$.
Denote by $L(p,S),L(p,S'),$ and $L(p,S^4)$ the links of $p$ in $S,S'$ and $S^4$, respectively.
If the circles $L(p,S)$ and $L(p,S')$ link in the three-sphere $L(p,S^4),$
then we say that the intersection of $S$ and $S'$ at $p$ is {\it transverse}.

\subsection*{Some real projective geometry}
Think of $A,B,C$ and $L$ as points in $\mathbb P_1^2$ and $P$ a projective line in $\mathbb P_1^2.$
We explain the criterion for determining whether the flat $X$ corresponding to $(A,B,C)$
intersects the copy $Y$ of $\mathbb H^2\times\mathbb R$ corresponding to $(L,P).$
For two distinct points, say $A$ and $B$, we denote by $\overline{AB}$ the projective
line passing through $A$ and $B$. In Figures 1,2, and 3, we draw the projective plane as a disk
with antipodal boundary points identified. 
\begin{prp}
\label{projective}
Suppose that $(A,B,C)$ and $(L,P)$ are in general position.
Then $\mathbb P_1^2\setminus\{\overline{AB},\overline{AC},\overline{BC}\}$
is a union of four open triangles. The spaces $X$ and $Y$ intersect 
if and only if the triangle containing the point $L$ does not meet the line $P$.
The intersection is necessarily transverse. 
\end{prp}
\begin{proof}
\begin{figure}
\centering
\includegraphics[scale=0.65]{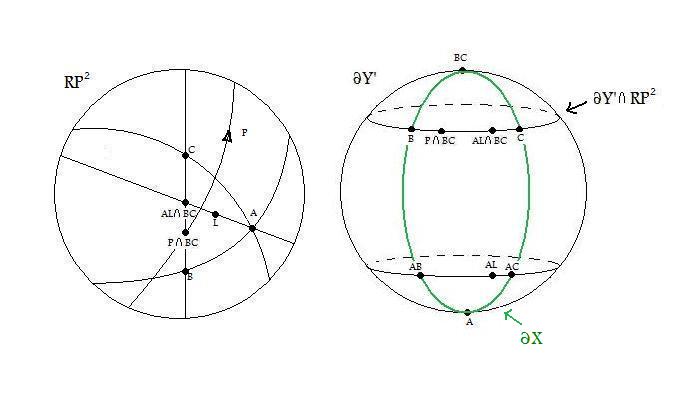}
\caption{}
\end{figure}
\begin{figure}
\centering
\includegraphics*[scale=0.65]{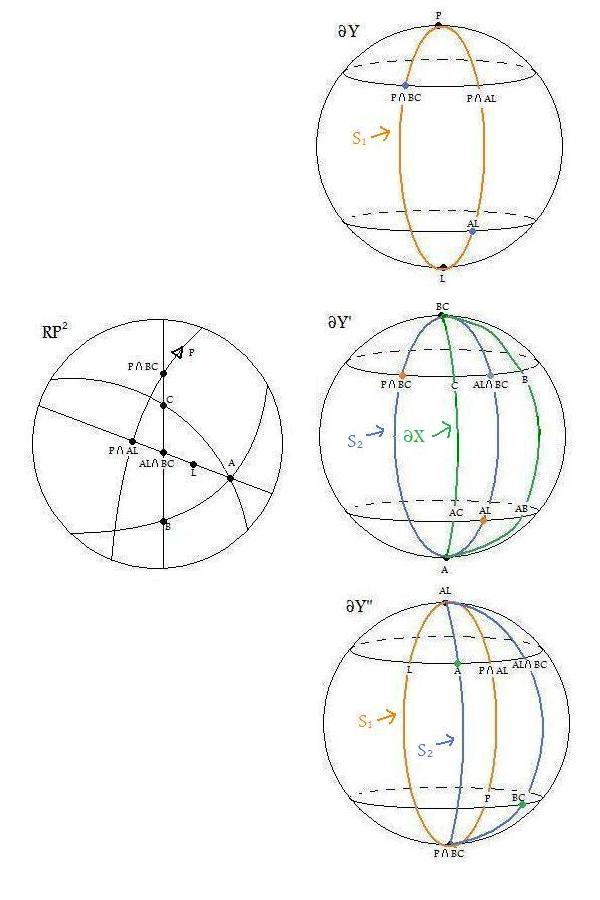}
\caption{}
\end{figure}
Suppose that $P$ passes through the triangle containing $L$. 
For one of the 
vertices of the triangle (without loss of generality, the vertex $A$) the pair of 
points $\{B,C\}$ do not link the pair of points $\{\overline{LA}\cap\overline{BC},P\cap\overline{BC}\}$
on the projective line $\overline{BC}.$ 
We rephrase this in the following way: 
Let $Y'$ be the copy $\mathbb H^2\times\mathbb R$ corresponding to 
$(A,\overline{BC}).$ Then $\partial Y\cap \partial Y'=\{\overline{LA},P\cap\overline{BC}\}$ 
and the non-linking statement becomes the statement that $\partial X$ does not link $\partial Y\cap\partial Y'$ in $\partial Y'.$ 
Thus $\partial X$ does not link $\partial Y$ in the sphere at infinity $S^4$, i.e. $X$ and $Y$ are disjoint.  
This is depicted in Figure 1.

Conversely, suppose that $P$ does not pass through the triangle containing $L$. 
The argument in the previous paragraph shows that $\partial X$ links $\partial Y\cap\partial Y'$ in the sphere $\partial Y'.$ 
To conclude that $\partial X$ links $\partial Y$ in the sphere at infinity $S^4$ we need an additional argument. Namely, 
we need to show that the intersection of the two-spheres $\partial Y$ and $\partial Y'$
at the point $P\cap\overline{BC}$ is transverse.

To show this, we look at the two-sphere at infinity $\partial Y''$ of the symmetric space $Y''$
corresponding to the pair $(P\cap\overline{BC},\overline{LA}).$ A neighborhood
of the point $P\cap\overline{BC}$ in $S^4$ can be identified with a neighborhood of
$(P\cap\overline{BC})\times(P\cap\overline{BC})$  in $\mathbb P^2_1\times\partial Y''.$
Now, $\mathbb P^2_1\cap\partial Y$ is the line $P$ 
together with the point $L$ while $\mathbb P^2_1\cap\partial Y'$ is the line $\overline{BC}$
together with the point $A$.
The lines $P$ and $\overline{BC}$ cross at $P\cap\overline{BC}.$
On the other hand $S_1:=\partial Y''\cap\partial Y$ is 
the boundary of the flat corresponding to $(L,\overline{AL}\cap P,P\cap\overline{BC})$
while $S_2:=\partial Y''\cap\partial Y'$ is the boundary of the flat 
corresponding to $(A,\overline{AL}\cap\overline{BC},P\cap\overline{BC}).$ The circles $S_1$ and $S_2$ in $\partial Y''$ 
cross at $P\cap\overline{BC}$ because the points $\{L,\overline{AL}\cap P\}$ link the points 
$\{A,\overline{AL}\cap\overline{BC}\}$ on the projective line $\overline{AL}.$ 
Since $P$ and $\overline{BC}$ cross in $\mathbb P^2_1$ and $S_1$ and $S_2$ cross in $\partial Y''$
we see that the intersection of $\partial Y$ with $\partial Y'$ at the point $P\cap\overline{BC}$ is transverse.
This second part of the argument is illustrated in Figure 2. 
\end{proof}
\begin{remark}
Corresponding to an open geodesic simplex $\Delta$ in $\mathbb P_1^2$ we can define its dual simplex $\Delta^*$ in the dual projective plane $\mathbb P_2^2$ (with the edges of $\Delta^*$ corresponding to the outside angles of the simplex $\Delta$). Recall that the points $A,B$ and $C$ subdivide $\mathbb P^2_1$ into four simplices $\Delta_1,\Delta_2,\Delta_3$ and $\Delta_4$. 
Then, one can show that Proposition \ref{projective} implies {\it the sphere corresponding to $(L,P)$ links the circle corresponding to $(A,B,C)$ if and only if $(L,P)\in\cup_{i=1}^4\Delta_i\times\Delta^*_i\subset\mathbb P^2_1\times\mathbb P^2_2$.} Since we do not use this description, we will not give a proof of this (but the reader is invited to draw some pictures and convince themselves of it).
\end{remark}
\subsection{An arrangement of intersections}
\begin{figure}
\label{patternpicture}
\centering
\includegraphics[scale=0.6]{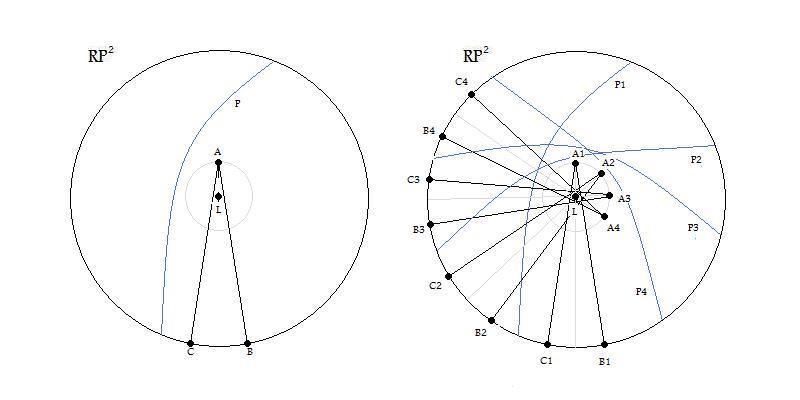}
\caption{}
\end{figure}

\label{intersectionpattern}
Using Proposition \ref{projective} it is easy to construct a pattern $X_1,\dots,X_N,Y_1,\dots,Y_N$
of flats $X_i$ and copies of $\mathbb H^2\times\mathbb R$ denoted $Y_i$, 
for which $X_i$ intersects $Y_j$ if and only if $i\leq j.$ One such pattern (with $N=4$) is indicated in Figure 
3.
The right picture is obtained from the left picture via rotations by a fixed amount. 
To get the required pattern for a general $N$ one starts with a sufficiently thin triangle $(A,B,C)$ and uses a small
enough rotation.

\section{The sphere at infinity of $\SL_m$}
In this section we describe the sphere at infinity in terms of flag-eigenvalue pairs in 
$\mathbb R^m$ (section 2.13.8 in \cite{eberlein}).
Points on the sphere at infinity correspond to geodesic rays $e^{tZ}$ where $Z$ is a trace zero, symmetric matrix
of length one. That is, the eigenvalues
$\lambda_1\geq\dots\geq\lambda_m$ of $Z$ arranged in descending order 
satisfy
\begin{eqnarray*}
\lambda_1+\dots+\lambda_m&=&0,\\
\lambda_1^2+\dots+\lambda_m^2&=&1.
\end{eqnarray*}
Let $E_i:=\{v\in\mathbb R^m\mid Zv=\lambda_iv\}$ be the $\lambda_i$-eigenspace of $Z$.
The eigenvalues together with the flag
$$
0\subset E_1\subset E_1+E_2\subset\dots\subset E_1+\dots+E_{m-1}\subset\mathbb R^m
$$ 
provide enough information to recover the symmetric matrix $Z$. Thus, the points on the sphere
at infinity are parametrized by flag-eigenvalue pairs. The flags
are best thought of as nested arrangements of points, lines, planes etc in real projective space $\mathbb P^{m-1}.$

\subsection*{The sphere of a direct sum decomposition}
Suppose we are given a direct sum decomposition $\mathbb R^m=U_1\oplus\dots\oplus U_r.$
Up to finite index, the subgroup of $\GL(\mathbb R^m)$
preserving this decomposition is $\GL(U_1)\times\cdots\times \GL(U_r)$
and the subgroup of $\SL(\mathbb R^m)$ preserving the decomposition is 
$\mathbb R^{r-1}\times \SL(U_1)\times\cdots\times \SL(U_r).$ 
This group acts on $\SL(\mathbb R^m)/\SO(\mathbb R^m)$ preserving
a totally geodesic symmetric subspace 
\begin{equation}
\label{product}
\mathbb R^{r-1}\times \SL(U_1)/\SO(U_1)\times\cdots\times \SL(U_r)/\SO(U_r).
\end{equation}
We denote the sphere at infinity 
of this symmetric subspace by $S(U_1,\dots,U_r).$ We note that $S(\mathbb R)=\emptyset$ because $\SL(\mathbb R)$
is a point. Generally, if $U$ is an $n$-dimensional vector space then $S(U)=S^{n(n+1)/2-2}.$
The product decomposition (\ref{product}) gives a join decomposition
\begin{equation}
\label{join}
S(U_1,\dots,U_r)=S^{r-2}\star S(U_1)\star\cdots\star S(U_r)
\end{equation} 
for the sphere at infinity. Next, we describe the flags that occur on this sphere.

\subsection*{The sphere $S(U_1,\dots,U_r)$ as the fix set of an element in $\SL(\mathbb R^m)$}
Let $\tau\in\SL(\mathbb R^m)$ be a diagonalizable element whose eigenspace
decomposition is $\mathbb R^m=U_1\oplus\cdots\oplus U_m.$ The minset\footnote{The element $\tau$ is a semisimple isometry of the symmetric space $H:=\SL(\mathbb R^m)/\SO(\mathbb R^m)$. The {\it minset} of $\tau$ is the set of points $\{x\in H\mid d_H(x,\tau x)\leq d_H(y,\tau y) \mbox{ for all } y\in H\}$ that are moved the least by $\tau$.} of $\tau$ is the symmetric
subspace $\mathbb R^{r-1}\times \SL(U_1)/\SO(U_1)\times\cdots\times \SL(U_r)/\SO(U_r)$. 
The action of $\tau$ extends to the sphere at infinity $S(\mathbb R^m)$
and its fixed set is precisely the sphere $S(U_1,\dots,U_r).$ The action of $\tau$ on the sphere
at infinity does not change the eigenvalues and sends a flag $F_1\subset\dots\subset F_k$
to the flag $\tau F_1\subset\dots\subset\tau F_k$ (see 2.13.8 of \cite{eberlein}.)
A standard linear algebra argument shows that the subspaces $V\subset\mathbb R^m$ that are preserved by $\tau$ are precisely those spanned by the eigenvectors of $\tau$. We will call these the subspaces {\it associated} to the eigenspace decomposition $(U_1,\dots,U_r)$. We will say that a flag $F_1\subset\dots\subset F_k$ is {\it associated} to the eigenspace decomposition $(U_1,\dots,U_r)$ if each of the subspaces $F_i$ in the flag is spanned by $\tau$-eigenvectors. These are precisely the flags that are preserved by $\tau$. Thus
\begin{itemize}
\item
The sphere at infinity $S(U_1,\dots,U_r)$ consists of those flag-eigenvalue pairs whose flag is associated to $(U_1,\dots,U_r)$.
\end{itemize}
\subsection*{Projective space description}
Here is a slightly more geometric description.
Let $\mathbb P^{m-1}$ be the projective space of lines through the origin in $\mathbb R^m$. 
The subspaces $U_i$ form a transverse arrangement $(U_1,\dots,U_r)$ of projective subspaces in $\mathbb P^{m-1}.$ 
A $k$-dimensional subspace $V\subset\mathbb R^m$ defines a $(k-1)$-dimensional projective subspace $V\cong\mathbb P^{k-1}\subset\mathbb P^{m-1}$.
It is spanned by the eigenvectors of $\tau$ precisely when there are $k$ points in the union $\cup_{i=1}^r U_i\subset\mathbb P^{m-1}$
that span $V$.  
\subsection*{General position}
Suppose that $L_1,\dots,L_m,L$ are points in $\mathbb P^{m-1}$ and $P$ is an $(m-2)$-dimensional projective subspace $\cong\mathbb P^{m-2}$. 
We say that $(L_1,\dots,L_m)$ and $(L,P)$ are in {\it general position} in $\mathbb P^{m-1}$ if 
\begin{enumerate}
\item
none of the points is contained in $P$,
\item
any $m$ points span an $(m-1)$-simplex in $\mathbb P^{m-1}$, and
\item
$$
\overline{L_iL_j}\cap P\not=\overline{L_iL_j}\cap\overline{LL_1\dots\hat{L_i}\dots\hat{L_j}\dots L_{m}}.
$$
Here, we denote by $\overline{L_iL_j}$ the projective line passing through the 
points $L_i$ and $L_j$ and by $\overline{LL_1\dots\hat{L_i}\dots\hat{L_j}\dots L_{m}}$ the projective hyperplane passing through all the points except for $L_i$ and $L_j$. In words, this third condition says that the intersection of the line passing through the points $L_i$ and $L_j$ with the hyperplane $P$ is not equal to the intersection of that line with the hyperplane passing through the remaining points.
\end{enumerate}
\subsection*{Intersecting spheres at infinity}
Suppose we have two direct sum decompositions 
$$
L_1\oplus\cdots\oplus L_m\cong\mathbb R^m\cong L\oplus P
$$
where $L,L_1,\dots,L_m$ are points and $P$ is a hyperplane in $\mathbb P^{m-1}$.
If $(L_1,\dots,L_m)$ and $(L,P)$ are in general position, then the first two general position conditions imply that
the only subspaces associated to both $(L_1,\dots,L_{m-2},\overline{L_{m-1}L_m})$ and $(L,P)$
are the point $L':=P\cap\overline{L_{m-1}L_m}$ and the hyperplane $Q:=\overline{LL_1\cdots L_{m-2}}.$\footnote{Since $P$ doesn't contain any of the points $L_i$, the only subspace contained in $P$ that is associated to $(L_1,\dots,L_{m-2},\overline{L_{m-1}L_m})$ is $L':=P\cap\overline{L_{m-1}L_m}$. If the subspace is not contained in $P$, then it must  contain $L$, and by general position $Q$ is the smallest dimensional such subspace associated to $(L_1,\dots,L_{m-2},\overline{L_{m-1}L_m})$. Finally, it is easy to see that it is the only such subspace.}
The third general position condition implies that the line $L'$ is not contained in the hyperplane $Q$. Thus, the spheres at infinity
$S(L_1,\dots,L_{m-2},\overline{L_{m-1}L_m})$ and $S(L,P)$ intersect at exactly two points
$L'$ and $Q$ in $S(\mathbb R^m)$.  

\subsection*{A neighborhood of $Q$}
The singleton codimension one flags form a projective space $\mathbb P^{m-1}_{m-2}$
in the sphere at infinity $S(\mathbb R^m).$ This projective space has a regular neighborhood $N(\mathbb P^{m-1}_{m-2})$ 
in $S(\mathbb R^m)$ which is a bundle whose fibre over a point $V\in\mathbb P^{m-1}_{m-2}$ 
can be identified with the cone Cone$(S(V))$ of the sphere at infinity of $V$.  

\section{The linking lemma}
In this section we explain how to compute intersections of totally geodesic submanifolds in the symmetric space $\SL_m\mathbb R/SO(m)$ in terms of linking on the sphere at infinity. We then describe how to determine linking at infinity in terms of the geometry of real projective space. This description turns out to be convenient for constructing and perturbing intersection patterns in the universal cover.

Suppose that $X\cong\mathbb R^{m-1}$ is a flat obtained as the minset of an element $\tau\in\SL_m\mathbb R$
with $m$ distinct real eigenvalues, while $Y$ is a copy of 
$(\SL_{m-1}\mathbb R/\SO(m-1))\times\mathbb R$
which is the minset of an involution $\rho\in\GL_m\mathbb R$ with eigenvalues $(-1,\dots,-1,1).$
Let $(L_1,\dots,L_m)$ be the eigenspaces of $\tau$ and $(L,P)$ be the line-hyperplane pair of eigenspaces of $\rho.$ 
Suppose that $(L_1,\dots,L_m)$ and $(L,P)$ are in general position. Then the spheres at infinity $\partial X=S(L_1,\dots,L_m)$ and $\partial Y=S(L,P)$ are disjoint. Using geodesic projection through a point to the sphere at infinity $S(\mathbb R^m)$ one sees that $X$ and $Y$ intersect if and only if the spheres $\partial X$ and $\partial Y$ link in $S(\mathbb R^m)$. If there is an intersection, then it is necessarily transverse because the spheres $\partial X$ and $\partial Y$ link. 

We give a geometric criterion for determining when the spheres $\partial X$ and $\partial Y$ link. 
\begin{prp}
\label{projective2}
Suppose that $(L_1,\dots,L_m)$ and $(L,P)$ are in general position. Denote by $V_i$ the hyperplane which passes through all the points $L_1,\dots,L_m\in\mathbb P^{m-1}$ except $L_i.$
The hyperplanes $V_1,\dots,V_m$ subdivide $\mathbb P^{m-1}$ into open $(m-1)$-simplices. 
The spheres $S(L_1,\dots,L_m)$ and $S(L,P)$ link if and only if the $(m-1)$-simplex $\sigma$
containing $L$ does not meet $P$.
\end{prp}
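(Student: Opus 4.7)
The plan is to use the criterion, stated in the paragraph preceding the proposition, that the spheres $\partial X$ and $\partial Y$ link in $S(\mathbb{R}^m)$ if and only if the totally geodesic subspaces $X$ and $Y$ actually meet in $H = \SL_m\mathbb{R}/\SO(m)$. Hence it suffices to prove that $X\cap Y\neq\emptyset$ if and only if the simplex $\sigma$ containing $L$ avoids $P$. I expect both sides of this equivalence to reduce to the same explicit sign condition on the coordinates of $L$ and $P$ in a basis adapted to the lines $L_1,\dots,L_m$.

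First I would choose vectors $v_i$ spanning each line $L_i$, use $v_1,\dots,v_m$ as a basis of $\mathbb{R}^m$, and write $L=[a_1v_1+\cdots+a_mv_m]$ and $P=\{x:\sum c_ix_i=0\}$ in these coordinates; general position forces all $a_i$ and $c_i$ to be nonzero. The flat $X$, as the minset of $\tau$, consists of positive-definite inner products $q$ that diagonalize in this basis, and is parametrized (up to the determinant normalization) by the positive numbers $\lambda_i=q(v_i,v_i)$. For such a $q$ the $q$-orthogonal of $L$ is the hyperplane $\{w:\sum \lambda_ia_iw_i=0\}$; this equals $P$ exactly when $\lambda_ia_i=kc_i$ for a common scalar $k$. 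A strictly positive solution in the $\lambda_i$ exists (and can then be rescaled to meet the determinant constraint) precisely when all products $a_ic_i$ share a sign. Thus $X\cap Y\neq\emptyset$ if and only if the $m$ numbers $a_1c_1,\dots,a_mc_m$ all have the same sign.

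For the simplex side, observe that in the chosen coordinates each hyperplane $V_i$ is the coordinate hyperplane $\{x_i=0\}$, so lifting to $S^{m-1}$ the chambers cut out by $V_1\cup\cdots\cup V_m$ are exactly the sign-chambers $\{\mathrm{sgn}(x_j)=\epsilon_j\}$, and the lift $\widetilde\sigma$ of the chamber containing $L$ is the one with $\epsilon_j=\mathrm{sgn}(a_j)$. On $\widetilde\sigma$ each term $c_jx_j$ has constant sign $\mathrm{sgn}(a_jc_j)$, so the linear functional $\sum c_jx_j$ is nowhere zero on $\widetilde\sigma$ if and only if all these signs agree; equivalently, $\sigma$ misses $P$ if and only if all $a_jc_j$ share a sign. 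Combined with the previous paragraph, this yields the claimed equivalence. The only delicate points are handling the positivity/determinant normalization of $X$ and passing to the double cover $S^{m-1}$ so that ``simplex meets $P$'' becomes well-posed in the non-orientable $\mathbb{P}^{m-1}$; each of these is bookkeeping rather than a genuine obstacle, and both sides of the equivalence ultimately coincide with the single sign condition on $(a_ic_i)$.
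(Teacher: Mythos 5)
Your proof is correct, but it takes a genuinely different route from the paper's. The paper proves Proposition~\ref{projective2} by induction on $m$, working intrinsically on the sphere at infinity: it first establishes the helper Lemma~\ref{link} via a suspension/join argument inside the auxiliary sphere $S(L_1,\dots,L_{m-2},\overline{L_{m-1}L_m})$, and then completes the inductive step with a transversality check near the projective space $\mathbb P^{m-1}_{m-2}$ of codimension-one flags, exploiting its regular-neighborhood bundle structure. You instead invoke the linking-iff-intersection criterion stated just before the proposition and reduce everything to a coordinate computation in $H$ itself: in the basis $(v_i)$ adapted to the $L_i$, a form $q\in X$ is diagonal with entries $\lambda_i$, the condition $q\in Y$ becomes the proportionality $\lambda_i a_i = k c_i$, and a strictly positive solution exists (with the determinant normalization absorbed by rescaling $k$) exactly when the numbers $a_ic_i$ all share a sign --- which is also precisely the condition for the lift of $\sigma$ to $S^{m-1}$ to miss $P$. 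Your route is shorter and makes the sign condition at the heart of the statement completely explicit, and the linking-iff-intersection fact it leans on is one the paper needs downstream in any case; the paper's proof, by contrast, is self-contained at the level of the sphere at infinity, staying entirely within the combinatorics of flags and joins without passing through the interior of the symmetric space.
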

\begin{proof}
We do an inductive argument.
The base case $m=2$ is easy. In this case the proposition is simply identifying the projective line
$\mathbb P^1$ with the circle at infinity $\partial\mathbb H^2.$
 
Now, we suppose that the Proposition is known for $m-1$ and prove it for $m$.
Note that either $P$ does not meet the simplex $\sigma$, or it intersects at least one
edge of the simplex. So, without loss of generality we will make the following assumption for the rest of the proof. 
\begin{itemize}
\item
If $P$ meets $\sigma$ then it intersects the edge $E$ of the simplex $\sigma$
lying on the line $\overline{L_{m-1}L_m}$.
\end{itemize}
\begin{remark}
So the projective line $\overline{L_{m-1}L_m}\cong\mathbb P^1$ get broken up into two components: the edge $E$ and its complement.
\end{remark}
If $P$ does not meet $\sigma$ at all, then it intersects the line $\overline{L_{m-1}L_m}$ outside the edge $E$. Let $Q:=\overline{LL_1\cdots L_{m-2}}$
be the hyperplane passing through the points $L,L_1,\dots,L_{m-2}.$ Since $L$ is contained inside the simplex $\sigma$, the hyperplane $Q$ intersects the edge $E$. Thus,
\begin{itemize}
\item
The hyperplane $P$ meets $\sigma$ if and only if on the line $\overline{L_{m-1}L_m}$ the pair of points $\{L_{m-1},L_{m}\}$ link with the points $\{P\cap\overline{L_{m-1}L_m},
Q\cap\overline{L_{m-1}L_m}\}$ 
\end{itemize}

The pair of points $\{L_{m-1},L_m\}=S(L_{m-1},L_m)=S^0$ lies inside the circle $S(\overline{L_{m-1}L_m})=S^1.$
It links the points $\{P\cap\overline{L_{m-1}L_m},Q\cap\overline{L_{m-1}L_m}\}$ in this 
circle if and only if the suspension $S(L_1,\dots,L_m)=S^{m-3}\star S(L_{m-1},L_m)$
links the points $\{P\cap\overline{L_{m-1}L_m},Q\cap\overline{L_{m-1}L_m}\}$ in the suspension 
$S(L_1,\dots,L_{m-2},\overline{L_{m-1}L_m})=S^{m-3}\star S(\overline{L_{m-1}L_m}).$
This is the same as linking the points $\{P\cap\overline{L_{m-1}L_m},Q\}$ because the flags
$Q$ and $Q\cap\overline{L_{m-1}L_m}$ are ``adjacent'': the segment on the sphere at infinity corresponding
to the flag $Q\cap\overline{L_{m-1}L_m}\subset Q$ connects them and does not meet $S(L_1,\dots,L_m)$ so they lie in the same component of 
$S(L_1,\dots,L_{m-2},\overline{L_{m-1}L_m})\setminus S(L_1,\dots,L_m)$.
In summary, 
\begin{itemize}
\item[($\star$)]
\label{linking}
The hyperplane $P$ meets $\sigma$ if and only if 
the sphere $S(L_1,\dots,L_m)$ does not link the pair of points $\{P\cap\overline{L_{m-1}L_m},Q\}$
inside $S(L_1,\dots,L_{m-2},\overline{L_{m-1}L_m})$.
\end{itemize}
To unburden notation slightly, we will from now on denote the sphere $S(L_1,\dots,L_m)$
by the letter $S:=S(L_1,\dots,L_m)$. 
Since $(L_1,\dots,L_m)$ and $(L,P)$ are in general position,
the spheres at infinity $S(L,P)$ and $S(L_1,\dots,L_{m-2},\overline{L_{m-1}L_m})$ 
intersect in precisely the two points $P\cap\overline{L_{m-1}L_m}$ and $Q.$
If $P$ meets the simplex $\sigma$ then ($\star$) shows one of the two connected components of 
$S(L_1,\dots,L_{m-2},\overline{L_{m-1}L_m})\setminus S$ is a ball which is bounded by $S$ and does not intersect $S(L,P).$
This means that the sphere $S$ does not link the sphere $S(L,P)$ which is half of what we needed to show.

Now, suppose that $P$ does not meet the simplex $\sigma.$ It remains to show that in this case
the sphere $S$ links the sphere $S(L,P).$ By ($\star$), in this situation
one of the components of $S(L_1,\dots,L_{m-2},\overline{L_{m-1}L_m})\setminus S$
contains $Q$ but does not contain $P\cap\overline{L_{m-1}L_m}.$ We call this component $D$.
It is a ball with boundary $\partial D=S.$ The sphere $S(L,P)$ does not meet $S$ and intersects
the ball $D$ in a single point $Q$. To show that $S$ and $S(L,P)$ link, it is enough to show that the 
intersection at $Q$ is transverse. Note that
\begin{itemize}
\item
The link of $Q$ in the entire sphere at infinity $S(\mathbb R^m)$ is the join $Lk(Q,\mathbb P^{m-1}_{m-2})\star S(Q)$.
\item 
The link of $Q$ in $D$ is $Lk(Q,\mathbb P^{m-1}_{m-2}\cap D)\star S(L_1,\dots,L_{m-2},Q\cap\overline{L_{m-1}L_m})$.
\item
The link of $Q$ in $S(L,P)$ is $Lk(Q,\mathbb P^{m-1}_{m-2}\cap S(L,P))\star S(L,Q\cap P)$.
\end{itemize}
We will show that
\begin{enumerate}
\item
$Lk(Q,\mathbb P^{m-1}_{m-2}\cap D)$ and $Lk(Q,\mathbb P^{m-1}_{m-2}\cap S(L,P))$ link inside of $Lk(Q,\mathbb P^{m-1}_{m-2})$.
\item
$S(L_1,\dots,L_{m-2},Q\cap\overline{L_{m-1}L_m})$ and $S(L,Q\cap P)$ link inside $S(Q)$. 
\end{enumerate}
which implies that $Lk(Q,D)$ and $Lk(Q,S(L,P))$ link inside $Lk(Q,S(\mathbb R^m))$, and hence the intersection of $D$ and $S(L,P)$ at $Q$ is transverse.
This will complete the proof of the Proposition. 
\begin{enumerate}
\item
Near $Q$, the intersection $\mathbb P^{m-1}_{m-2}\cap D$ is a projective line in $\mathbb P^{m-1}_{m-2}$ (the line of all 
projective hyperplanes in $\mathbb P^{m-1}_1$ passing through the points $L_1,\dots,L_{m-2}$), while $\mathbb P^{m-1}_{m-2}\cap S(L,P)$ 
is a projective hyperplane in $\mathbb P^{m-1}_{m-2}$ (the hyperplane of all hyperplanes passing through $L$ in $\mathbb P^{m-1}_1$).
By general position, the line and hyperplane intersect transversally at $Q$, which shows that $Lk(Q,\mathbb P^{m-1}_{m-2}\cap D)$ and $Lk(Q,\mathbb P^{m-1}_{m-2}\cap S(L,P))$ link inside of $Lk(Q,\mathbb P^{m-1}_{m-2})$.
\item
Second, recall that $L$ is contained in the $(m-1)$-simplex $\sigma,$ the hyperplane $P$ does not meet $\sigma,$
and the hyperplane $Q=\overline{LL_1\cdots L_{m-2}}$ passes through the points $L,L_1,\dots,L_{m-2}.$
Now, we intersect with the hyperplane $Q\cong\mathbb P^{m-2}.$
Notice that $L$ is contained in the $(m-2)$-simplex $Q\cap\sigma$ and the hyperplane $Q\cap P$ in $Q$
does not meet $Q\cap\sigma.$ Further, the simplex $Q\cap\sigma$ has vertices $L_1,\dots,L_{m-2},Q\cap\overline{L_{m-1}L_m}.$
Moreover, $(L_1,\dots, L_{m-2}, Q\cap\overline{L_{m-1}L_m})$ and $(L,Q\cap P)$ are in general position in $Q\cong\mathbb P^{m-2}$.\footnote{The third general position condition is a consequence of the fact that $L$ is contained in the $(m-2)$-simplex $Q\cap\sigma$ while the hyperplane $Q\cap P$ does not meet the simplex $Q\cap\sigma$.} Thus, we can apply the inductive hypothesis to conclude the spheres $S(L_1,\dots,L_{m-2},Q\cap\overline{L_{m-1}L_m})$
and $S(L,Q\cap P)$ link in $S(Q).$
\end{enumerate} 
\end{proof}
\begin{figure}
\label{patternpicture}
\centering
\includegraphics[scale=0.7]{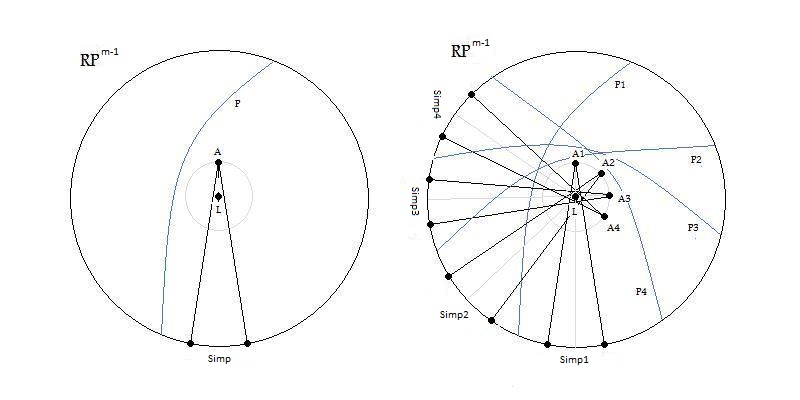}
\caption{}
\end{figure}

\subsection{An arrangement of intersections}
\label{intersectionpattern}
\label{intersectionpattern2}
Using Proposition \ref{projective2} it is easy to construct a pattern $X_1,\dots,X_N,Y_1,\dots,Y_N$
of flats $X_i$ and copies of $(\SL_{m-1}\mathbb R/\SO(m-1))\times\mathbb R$ denoted $Y_i$, 
for which $X_i$ intersects $Y_j$ if and only if $i\leq j.$ One such pattern (with $N=4$) is indicated in Figure 1. 
We draw the ball model of projective space $\mathbb P^{m-1}$, with points on the boundary of the ball identified via the antipodal map.
The right picture is obtained from the left picture via rotations by a fixed amount. 
To get the required pattern for a general $N$ one starts with a sufficiently thin geodesic $(m-1)$-simplex $A\star$Simp and uses a small
enough rotation.

\section{Elements defined over $\mathbb Q$}
Everything we've done so far has been in the symmetric space $\SL_m\mathbb R/\SO(m).$ The
rational structure has not yet entered the picture. It starts to play a role
when one tries to understand how the spaces $X$ and $Y$ project to arithmetic quotients of the
symmetric space. 
\subsection*{Rational flats}
Let $\tau\in \SL_m\mathbb Q$ be an element with $m$ distinct real eigenvalues,
and denote its minset by $X$. It is a totally geodesic submanifold of $H$. 
The centralizer $C_{\tau}(\mathbb R)\cong(\mathbb R^*)^{m-1}$ acts transitively on the minset by orientation preserving isometries. 
The group of all isometries preserving the minset $S_X(\mathbb R):=\{g\in \SL_m\mathbb R\mid gX=X\}$
is the semidirect product $C_{\tau}(\mathbb R)\rtimes S_m$ of the centralizer with the symmetric 
group on $m$ letters (the symmetric group permutes the eigenspaces of $\tau.$) 
Let $\Gamma<\SL_m\mathbb Z$ be a finite index torsionfree subgroup and denote by $\Gamma_X:=\Gamma\cap S_X(\mathbb R)$ 
the group of all isometries in $\Gamma$ preserving the flat $X.$
The following lemma shows that after passing
to a deep enough congruence subgroup we can assume that all isometries of $\Gamma_X$ commute with $\tau.$
\begin{lem}
Fix a prime $p.$ 
Then $\Gamma_X(p^n)\subset C_{\tau}(\mathbb R)$ for sufficiently large $n$. 
\end{lem}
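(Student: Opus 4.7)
\bigskip

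\noindent\textbf{Proof proposal.} The plan is to turn the geometric statement (elements of $\Gamma_X(p^n)$ must commute with $\tau$) into a purely arithmetic divisibility statement about the finitely many possible conjugates $\tau_w = w\tau w^{-1}$, $w$ a permutation of the eigenspaces of $\tau$. The key observation is that if $\gamma \in \Gamma_X \setminus C_\tau$ induces a nontrivial permutation $w$ of the eigenspaces, then $\gamma \tau \gamma^{-1}$ is a specific rational matrix different from $\tau$, and $\gamma \equiv I \pmod{p^n}$ forces $\tau$ and this matrix to agree modulo $p^n$, which is impossible for $n$ large.

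First I would unpack the semidirect product decomposition $S_X(\mathbb R)=C_\tau(\mathbb R)\rtimes W$, where $W\cong S_m$ is realized as permutation matrices in the $\tau$-eigenbasis. Any $\gamma\in\Gamma_X$ writes uniquely as $\gamma=c\,w$, and since $c\in C_\tau$ commutes with every matrix diagonal in the eigenbasis, a direct computation gives
\[
\gamma\tau\gamma^{-1}=c\,w\tau w^{-1}c^{-1}=\tau_w,
\]
where $\tau_w$ is the diagonalizable matrix with the same eigenspaces as $\tau$ but with eigenvalues permuted by $w$. Since $\tau$ has $m$ \emph{distinct} eigenvalues, $\tau_w\neq\tau$ whenever $w\neq 1$. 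Moreover, because $\gamma\in\SL_m(\mathbb Z)$ and $\tau\in\SL_m(\mathbb Q)$, each $\tau_w=\gamma\tau\gamma^{-1}$ lies in $\SL_m(\mathbb Q)$; let $D$ be a common denominator so that $D\tau$ and every $D\tau_w$, $w\in S_m$, are integer matrices.

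Now suppose $\gamma\in\Gamma_X(p^n)$, say $\gamma=I+p^n A$ with $A\in M_m(\mathbb Z)$, and suppose $\gamma$ induces a nontrivial permutation $w$. From $\gamma\tau=\tau_w\gamma$ I would compute
\[
\tau-\tau_w = p^n(\tau_w A - A\tau),
\]
and multiplying by $D$ obtain $D(\tau-\tau_w)=p^n\bigl((D\tau_w)A-A(D\tau)\bigr)$, whose right-hand side is $p^n$ times an integer matrix. Hence every entry of the fixed nonzero integer matrix $D(\tau-\tau_w)$ must be divisible by $p^n$. Setting $v_w:=\min\{v_p(\text{entries of }D(\tau-\tau_w))\}<\infty$, this forces $n\le v_w$; taking
\[
n_0 := 1 + \max_{w\in S_m\setminus\{1\}} v_w
\]
(a finite quantity because $S_m$ is finite) rules out the case $w\neq 1$ for all $n\ge n_0$, so every $\gamma\in\Gamma_X(p^n)$ lies in $C_\tau(\mathbb R)$.

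The only mildly delicate point is making sure the denominators behave correctly: one must introduce the uniform denominator $D$ so that the identity $\tau-\tau_w=p^n(\tau_w A-A\tau)$, which a priori is a rational identity, can be promoted to a statement about $p$-divisibility of entries of the fixed \emph{integer} matrix $D(\tau-\tau_w)$. Once this bookkeeping is in place, the argument is immediate from $\tau_w\neq\tau$ for $w\neq 1$ and the finiteness of $S_m$.
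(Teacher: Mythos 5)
Your argument is essentially correct and gives a more elementary, quantitative variant of the paper's proof. The paper argues topologically over a $p$-adic field: after passing to $K_p=\mathbb{Q}_p(\lambda_1,\dots,\lambda_m)$, the non-identity cosets $C_\tau(K)\gamma_1,\dots,C_\tau(K)\gamma_r$ of $S_X(K)$ all lie in the closed subset $\bigcup_i\{x:[x\gamma_i^{-1},\tau]=1\}$ of $\SL_mK_p$, which misses the identity, hence misses a small $p$-adic identity neighborhood $U(p^n)$; since $\Gamma_X(p^n)=\Gamma_X\cap U(p^n)$, this forces $\Gamma_X(p^n)\subset C_\tau$. You instead extract an explicit divisibility obstruction: from $\gamma\tau\gamma^{-1}=\tau_w$ and $\gamma=I+p^nA$ you derive $D(\tau-\tau_w)=p^n\bigl((D\tau_w)A-A(D\tau)\bigr)$, so the fixed nonzero integer matrix $D(\tau-\tau_w)$ has all entries divisible by $p^n$, giving an explicit bound $n\le v_w$. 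What this buys you over the paper's argument is a concrete $n_0$ in terms of $p$-adic valuations of the entries of $\tau-\tau_w$; what the paper's compactness/closedness argument buys is brevity and no bookkeeping with denominators.

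One small point needs tidying. You write ``let $D$ be a common denominator so that $D\tau$ and every $D\tau_w$, $w\in S_m$, are integer matrices.'' This is not quite right: for a general $w\in S_m$, the matrix $\tau_w$ (same eigenspaces, permuted eigenvalues) is \emph{not} rational. Indeed, if $G$ denotes the Galois group of $\mathbb{Q}(\lambda_1,\dots,\lambda_m)/\mathbb{Q}$ with its natural embedding $\sigma\mapsto\phi_\sigma$ into $S_m$, then $\sigma(\tau_w)=\tau_{\phi_\sigma w\phi_\sigma^{-1}}$, so $\tau_w$ is rational exactly when $w$ centralizes the image of $G$ in $S_m$. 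What rescues your argument is that you only ever need $w$ in the image of the natural homomorphism $\Gamma_X\to S_m$, and for those $w$ the identity $\tau_w=\gamma\tau\gamma^{-1}$ with $\gamma\in\SL_m\mathbb{Z}$ shows $\tau_w$ is automatically rational (it is a rational conjugate of $\tau$). So the maximum defining $n_0$ should be taken over the finitely many nontrivial $w$ in the image of $\Gamma_X\to S_m$ rather than over all of $S_m\setminus\{1\}$. With that restriction, $n_0$ is still a well-defined finite quantity and the proof goes through unchanged.
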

\begin{proof}
Let $\lambda_1,\dots,\lambda_m$ be the eigenvalues of $\tau$
and $K:=\mathbb Q(\lambda_1,\dots,\lambda_m)$ the field obtained by adjoining those eigenvalues. 
The group $S_X(K)$
is $\SL_m K$-conjugate to $(K^*)^{m-1}\rtimes S_m,$
where the $(K^*)^{m-1}$ consists of determinant one diagonal matrices and the symmetric group $S_m$ is represented by permutation matrices. 
From this it follows that
$S_X(K)$ decomposes into $C_{\tau}(K)$-cosets 
$$
S_X(K)=C_{\tau}(K)\cup C_{\tau}(K)\gamma_1\cup\dots\cup C_{\tau}(K)\gamma_r,
$$ 
with the matrices $\gamma_i$ being conjugates of the permutation matrices, i.e. lying in 
$\SL_mK.$ Let $K_p:=\mathbb Q_p(\lambda_1,\dots,\lambda_m)$ be the $p$-adic completion. 
Note that the non-identity cosets lie in the closed subset
$$
\bigcup_{i=1}^r\{x\in \SL_m K_p\mid[x\gamma_i^{-1},\tau]=1\},
$$
of $\SL_m K_p.$
This subset does not contain $1$ since $[\gamma^{-1}_i,\tau]\not=1,$ so there
is a small $p$-adic neighborhood of the identity $U(p^n)$ where all elements from $S_X(K)$ 
commute with $\tau$ i.e.
$$
\Gamma_X(p^n)=\Gamma_X\cap U(p^n)\subset S_X(K)\cap U(p^n)\subset C_{\tau}(\mathbb R). 
$$
\end{proof}
Since $\tau$ is a matrix with entries in $\mathbb Q$, the image of $X$ in $H/\Gamma$ is an isometrically immersed 
($m-1$)-dimensional flat. (See Theorem D in \cite{schwermer} for a proof of this.)
Let $p_{\tau}(t)=\det(t-\tau)$ be the characteristic polynomial.
The following is a special case of a theorem of Prasad and Raghunathan in \cite{prasadraghunathan}:
\begin{prp}
\label{compact}
Suppose $\tau\in \SL_m\mathbb Q$ has $m$ distinct real eigenvalues and irreducible characteristic polynomial.
Then $X/\Gamma_X$ is compact and finitely covered by a ($m-1$)-dimensional torus $\mathbb T^{m-1}.$
\end{prp}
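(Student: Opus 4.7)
The plan is to identify $\Gamma_X^0 := \Gamma_X\cap C_\tau(\mathbb R)^0$ with a finite-index subgroup of the totally positive norm-one units of an order in the totally real number field $K := \mathbb Q(\tau)$, and then invoke Dirichlet's unit theorem to conclude that this unit group acts on $X\cong\mathbb R^{m-1}$ as a lattice of translations.

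First I would set up the algebra. Since $p_\tau$ is irreducible of degree $m$ with $m$ distinct real roots, $K=\mathbb Q[t]/(p_\tau)$ is a totally real number field of degree $m$, and the map $t\mapsto\tau$ realizes $K$ as the subring $\mathbb Q[\tau]\subset M_m(\mathbb Q)$. Because $\tau$ is regular semisimple, $\mathbb Q[\tau]$ is exactly the commutant of $\tau$, so $C_\tau(\mathbb Q)\cong K^{*,1}$, with the determinant corresponding to the norm $N_{K/\mathbb Q}$. The $m$ real eigenlines of $\tau$ give $K\otimes\mathbb R\cong\mathbb R^m$ via $\alpha\mapsto(\sigma_1(\alpha),\dots,\sigma_m(\alpha))$, under which $C_\tau(\mathbb R)^0$ becomes the connected component of the norm-one diagonal torus, isomorphic to $\mathbb R^{m-1}$ and acting simply transitively on $X$ by translations. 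The subgroup $\Gamma_X^0\subset\Gamma_X$ has finite index (the quotient injects into the finite group $(\mathbb Z/2)^{m-1}\rtimes S_m$ of components-plus-permutations), and embeds into $K^{*,1}$ as totally positive norm-one elements preserving $\mathbb Z^m$; these form a finite-index subgroup of the totally positive norm-one unit group $\mathcal O^{*,+,1}$ of the order $\mathcal O:=\{\alpha\in K:\alpha\cdot\mathbb Z^m\subset\mathbb Z^m\}$, which sits between $\mathbb Z[\tau]$ and $\mathcal O_K$.

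Next I would invoke Dirichlet's unit theorem for $K$ (signature $r_1=m$, $r_2=0$): $\mathcal O^*/\{\pm1\}$ is free abelian of rank $r_1+r_2-1=m-1$, and the logarithmic embedding $\alpha\mapsto(\log|\sigma_i(\alpha)|)_{i=1}^m$ sends $\mathcal O^*$ onto a lattice in the hyperplane $\{\sum x_i=0\}\subset\mathbb R^m$. Since $\mathcal O^{*,+,1}\subset\mathcal O^*$ has finite index, its logarithmic image is also a lattice of rank $m-1$. Under the identification of $X$ with $C_\tau(\mathbb R)^0$, the exponential map intertwines this logarithmic embedding with the translation action, so $\Gamma_X^0$ acts on $X\cong\mathbb R^{m-1}$ as a lattice of translations. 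Since $\Gamma$ is torsion-free, so is $\Gamma_X^0$, and therefore $X/\Gamma_X^0\cong\mathbb T^{m-1}$; as $[\Gamma_X:\Gamma_X^0]<\infty$, the quotient $X/\Gamma_X$ is finitely covered by this torus and in particular compact.

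The only technical step I anticipate is verifying that the two natural Euclidean parametrizations of the identity component of the real diagonal torus in $\SL_m\mathbb R$, namely Dirichlet's logarithmic embedding and the Riemannian exponential from the $\mathfrak p$-direction along the flat, agree up to a linear isomorphism. This is a routine check on the standard diagonal subgroup, and once in place it turns the Dirichlet rank computation directly into the desired cocompactness.
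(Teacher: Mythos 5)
The paper does not actually prove this proposition; it is stated as a special case of the Prasad--Raghunathan theorem and that reference is cited. Your proof takes a genuinely different route: a direct, essentially self-contained argument specific to $\SL_m$ via Dirichlet's unit theorem. The argument is correct. Regularity of $\tau$ (characteristic polynomial equals minimal polynomial, forced by having $m$ distinct eigenvalues) makes the commutant of $\tau$ the commutative algebra $\mathbb Q[\tau]\cong K$; irreducibility makes $K$ a field, and reality of the eigenvalues makes it totally real of degree $m$; the integral centralizer $C_\tau(\mathbb Z)$ is then identified with norm-one units of the multiplier order of the lattice $\mathbb Z^m\subset K$, and Dirichlet's theorem gives rank exactly $m-1$, which is precisely what is needed for $X/\Gamma_X^0$ to be a compact torus. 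The finite-index reductions ($\Gamma_X\supset\Gamma_X\cap C_\tau(\mathbb R)\supset\Gamma_X^0$, and $\Gamma\cap C_\tau(\mathbb Z)$ finite index in $C_\tau(\mathbb Z)$) are all in order. The one step you flag, that the Riemannian flat coordinates on $X$ and the Dirichlet logarithmic coordinates on $C_\tau(\mathbb R)^0$ differ by a linear isomorphism, is indeed routine: the flat through the basepoint $o$ tangent to a Cartan subalgebra $\mathfrak a$ is $\exp(\mathfrak a)\cdot o$ and the one-parameter group $\exp(tZ)$ acts on it by translation in the direction $Z$; conjugating $\tau$ to diagonal form identifies $\mathfrak a$ with the hyperplane $\{\sum x_i=0\}$ and the centralizer $C_\tau(\mathbb R)^0$ with $\exp(\mathfrak a)$, and the Dirichlet log is exactly this $\mathfrak a$-coordinate. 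What your approach buys is transparency: it makes explicit that, for $\SL_m\mathbb Z$, the compactness of these maximal flats is equivalent to Dirichlet's unit theorem. The Prasad--Raghunathan theorem the paper cites is far more general (arbitrary arithmetic lattices in semisimple groups) but less elementary.
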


\subsection*{Rational copies of $(\SL_{m-1}\mathbb R/\SO(m-1))\times\mathbb R$}
Now, let $\rho\in \GL_m\mathbb Q$ be a diagonalizeable matrix with eigenvalues $(-1,\dots,-1,1).$ 
Then the centralizer $C_\rho(\mathbb R)\cong \GL_{m-1}\mathbb R$
acts on the minset $Y\cong\SL_{m-1}\mathbb R/\SO(m-1))\times\mathbb R$ of 
$\rho$, and the quotient $Y/\Gamma_Y$
is a properly immersed submanifold of $H/\Gamma.$ (Theorem D in \cite{schwermer}.) In this case the group $S_Y(\mathbb R)$ of 
isometries preserving $Y$ is just the centralizer $C_{\rho}(\mathbb R).$
Note that $C_{\rho}(\mathbb R)\cong \GL_{m-1}\mathbb R$ has two components.
If $m$ is even, then the entire centralizer preserves the orientation of $Y$, but if $m$ 
is odd, then the elements not in the identity component do not preserve the orientation of $Y$.
The eigenspaces of $\rho$ are a (rational!) line $L$ and a hyperplane $P$. 
Whether or not an element preserves orientation can be determined by its action on the line $L$.
(This is noted in the discussion after Corollary 2.4 of \cite{leeschwermer}.) 
Let $\gamma\in C_{\rho}(\mathbb Z)$ be an element in the centralizer with integer entries. Then $\gamma$ preserves the rational line $L$, so $\gamma\mid_L$ is multiplication by some $\lambda\in\mathbb Q$. In fact, $\lambda=\pm 1$:
\begin{proof}
Write the characteristic polynomial of $\gamma$ as
\begin{equation}
\label{characteristic}
\det(t-\gamma)=(t-\lambda_1)\cdots(t-\lambda_m)=t\cdot p(t)\pm 1
\end{equation}
where $p(t)$ is a monic polynomial with integer coefficients.  
Since $\lambda$ is a root of this monic polynomial with integer coefficients, it is an algebraic integer. Moreover, the right hand side of (\ref{characteristic}) shows that the inverse $\lambda^{-1}=\pm p(\lambda)$ is also an algebraic integer. Since $\lambda\in\mathbb Q$ is a rational number, this shows that both $\lambda$ and $\lambda^{-1}$ are integers (a rational algebraic integer is just an integer), which can only happen if $\lambda=\pm 1$.
\end{proof}
Further, the element $\gamma$ preserves an orientation on $Y$ precisely when we have $+$ signs, i.e. $\det\gamma\mid_P=\det\gamma\mid_L=1.$
Let $0\not=v\in L_{\mathbb Z}\subset\mathbb Z^m\subset\mathbb Q^m$ be a non-zero vector with 
integer entries. Then, $\gamma v=\pm v.$ 
For sufficiently large $n$ we have $v\not=-v$ in the quotient $L_{\mathbb Z/p^n}\subset(\mathbb Z/p^n)^m$ so that the 
subgroup $\Gamma_Y(p^n)$ which acts trivially on $L_{\mathbb Z/p^n}$ must preserve the orientation of $Y$.

\subsection*{Embeddings vs immersions} 
A result of Raghunathan (Theorem E in \cite{schwermer}) shows that we can always 
find a positive integer $K_0$ such that for $K\geq K_0$
the maps $X/\Gamma_X(K)\ra H/\Gamma(K)$ and $Y/\Gamma_Y(K)\ra H/\Gamma(K)$ are embeddings. 
Thus, we can replace the group $\Gamma$ by the congruence subgroup $\Gamma(p^n),p^n\geq K_0$ to make 
sure that the quotients $X/\Gamma_X(p^n)$ and $Y/\Gamma_Y(p^n)$ are embedded in the quotient $H/\Gamma(p^n)$.
We remark that this is the same as saying that $\Gamma(p^n)X$ is a disjoint union of copies of $X$ in $H$, and
$\Gamma(p^n)Y$ is a disjoint union of copies of $Y$ in $H$.

We summarize the conclusions of this section in the following proposition, and add an extra bullet. 
The notations are the same as the ones used throughout the section.
\begin{prp}
\label{reduction}
Let $p$ be a prime. Then for sufficiently large $n$, 
\begin{itemize}
\item
$\Gamma(p^n)X$ is a disjoint union of copies
of $X$ and $\Gamma(p^n)Y$ is a disjoint union of copies of $Y$. 
\item
The subgroup $\Gamma_X(p^n)$ centralizes $\tau$ and the subgroup $\Gamma_Y(p^n)$ centralizes $\rho.$
\item
There are $\Gamma(p^n)$-invariant orientations on $\Gamma(p^n)X$ and $\Gamma(p^n)Y$.
\end{itemize}
\end{prp}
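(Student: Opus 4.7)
The plan is that the proposition summarizes what the three subsections of this section have already proved piecemeal; the task is to pick a single $n$ that simultaneously exceeds every largeness threshold used. The second bullet is immediate: the $\Gamma_X(p^n)$ claim is exactly the Lemma earlier in this section, while $\Gamma_Y(p^n) \subset C_\rho(\mathbb R)$ is automatic for every $n$ since we already observed $S_Y(\mathbb R) = C_\rho(\mathbb R)$. The first bullet follows by invoking Raghunathan's theorem (Theorem E in \cite{schwermer}) to produce a threshold $K_0$ above which the quotient maps $X/\Gamma_X(K) \ra H/\Gamma(K)$ and $Y/\Gamma_Y(K) \ra H/\Gamma(K)$ are embeddings; for $p^n \geq K_0$ this embedding condition is equivalent to the disjoint-union statement, as recorded in the embeddings-versus-immersions subsection.

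For the third bullet, my approach is to first select an orientation on $X$ (respectively $Y$) that is preserved by the stabilizer $\Gamma_X(p^n)$ (respectively $\Gamma_Y(p^n)$), and then transport it across coset representatives of the stabilizer in $\Gamma(p^n)$ to orient the whole disjoint union; well-definedness on each translate follows from stabilizer-invariance, and $\Gamma(p^n)$-invariance of the resulting orientation is built into the transport construction. To see that any orientation on $X$ is preserved by $\Gamma_X(p^n)$, combine the second bullet, which places this group inside $C_\tau(\mathbb R) \cong (\mathbb R^*)^{m-1}$, with the observation recorded at the start of the rational-flat subsection that this centralizer acts by orientation-preserving isometries. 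For $Y$, I will reuse the integrality argument from the rational $(\SL_{m-1}\mathbb R/\SO(m-1))\times\mathbb R$ subsection: once $n$ is large enough that a fixed nonzero vector $v \in L_{\mathbb Z}$ satisfies $v \not\equiv -v \pmod{p^n}$, every $\gamma \in \Gamma_Y(p^n)$ must act as $+1$ on $L$, forcing $\det \gamma|_P = +1$ and hence preservation of the orientation of $Y$.

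No genuine obstacle arises; the only bookkeeping is to let $n$ exceed the maximum of the three thresholds appearing above, namely the one from the centralizer lemma, the embedding threshold $K_0$ coming from Raghunathan's theorem, and the exponent needed to separate $v$ from $-v$ in $L_{\mathbb Z/p^n}$.
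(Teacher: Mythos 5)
Your proposal is correct and follows essentially the same route as the paper: the first two bullets are by direct reference to the embeddings-versus-immersions discussion and the centralizer lemma (with $S_Y(\mathbb R)=C_\rho(\mathbb R)$ making the $Y$-case automatic), and the third bullet is handled by first checking that $\Gamma_X(p^n)$ and $\Gamma_Y(p^n)$ preserve orientations — via $\Gamma_X(p^n)\subset C_\tau(\mathbb R)$, respectively the mod-$p^n$ integrality argument on $L_{\mathbb Z}$ — and then transporting the orientation to the translates. If anything, your explicit ``transport across coset representatives, well-defined by stabilizer-invariance'' phrasing makes the construction of the invariant orientation on $\Gamma(p^n)X$ and $\Gamma(p^n)Y$ slightly cleaner than the paper's shorthand.
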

\begin{proof}
Everything except for the third point has already been proved in the section above.
Further, we've shown that $\Gamma_Y(p^n)$ preserves an orientation on $Y$ for large enough $n$.
For such $n$ we can---starting with an orientation $Y^+$ of $Y$---define a $\Gamma(p^n)$-invariant orientation
on $\Gamma(p^n)Y$ by $(\gamma Y)^+=\gamma Y^+$ for $\gamma\in\Gamma_Y(p^n)$. 

For large enough $n$, the group $\Gamma_X(p^n)$ is contained in the centralizer $C_{\tau}(\mathbb R),$
and this centralizer preserves an orientation on $X$. Hence, we can define an invariant orientation on $\Gamma(p^n)X$ in
the same way as for $Y.$
\end{proof}

\section{Rational intersection patterns}
In subsection \ref{intersectionpattern2} we described a certain intersection pattern $\{X_i,Y_i\}_{i=1}^N$
involving finitely many flats and copies $(\SL_{m-1}\mathbb R/\SO(m-1))\times\mathbb R$. 
In this section we explain how to get the same intersection pattern using using rational flats that are compact in the quotient,
and copies of $(\SL_{m-1}\mathbb R/\SO(m-1))\times\mathbb R$ that are defined over $\mathbb Q.$ 

\subsection*{Intersection patterns are preserved by small perturbations}
From the description of intersections between $X_i$ and $Y_i$ given Proposition \ref{projective2},
it is clear that an intersection pattern does not change when $X_i$ is perturbed to a nearby flat $X'_i,$
(so the $m$-tuple of eigenspaces is perturbed slightly). Similarly, the intersection pattern does not
change when $Y_i$ is perturbed to a nearby $Y'_i$ (so the line-hyperplane pair $(L,P)$ is perturbed slightly.) 

\subsection*{Small rational perturbations of $X$ and $Y$ exist}
Note that the $\SL_m\mathbb Q$-orbit of transverse $m$-tuple $(L_1,\dots,L_m)$ is dense
in the space of all such triples. In particular let $\tau$ be an element with $m$
distinct real eigenvalues and irreducible characteristic polynomial. The orbit of the $m$-tuple of eigenspaces of $\tau$
is dense. Thus, any flat $X$ has arbitrarily small perturbations whose quotients are compact $(m-1)$-dimensional tori.
Similarly, the rational line-hyperplane pairs are dense in the space of all such pairs, so $Y$ has 
arbitrarily small perturbations whose quotients are properly immersed.

Putting these two observations together with the intersection pattern described in subsection \ref{intersectionpattern2},
we get the following
\begin{prp}
\label{rationalintersectionpattern}
There are rational flats $\{X_i\}_{i=1}^N$ whose quotients $X_i/\Gamma_{X_i}$ are compact 
and rational copies of $(\SL_{m-1}\mathbb R/\SO(m-1))\times\mathbb R$ denoted $\{Y_i\}_{i=1}^N$ whose quotients $Y_i/\Gamma_{Y_i}$
are properly immersed in $H/\Gamma$ such that $X_i$ intersects $Y_j$ if and only if $i\leq j$. Further, all the intersections of the
$X_i$ and the $Y_j$ are transverse.  
\end{prp}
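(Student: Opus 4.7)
The plan is to combine three things already established: the real intersection pattern from subsection~\ref{intersectionpattern2}, the openness of the linking criterion in Proposition~\ref{projective2}, and the density of rational parameters producing rational flats and rational copies of $(\SL_{m-1}\R/\SO(m-1))\times\R$.

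First I would start from the pattern $\{X_i,Y_i\}_{i=1}^N$ constructed in subsection~\ref{intersectionpattern2}: the flats $X_i$ are determined by $m$-tuples of lines $(L_1^{(i)},\ldots,L_m^{(i)})$ in $\mathbb{P}^{m-1}$, while the copies $Y_i$ of $(\SL_{m-1}\R/\SO(m-1))\times\R$ are determined by line-hyperplane pairs $(L^{(i)},P^{(i)})$. By Proposition~\ref{projective2}, whether $X_i$ meets $Y_j$ is governed by whether the projective simplex cut out by the hyperplanes through the $L_k^{(i)}$ that contains $L^{(j)}$ meets the hyperplane $P^{(j)}$, under the standing general-position assumption. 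Both the general-position condition and the condition \emph{``the simplex meets $P$''} (respectively its negation) are open in the space of configurations. Consequently the whole intersection pattern is stable under sufficiently small simultaneous perturbations of the tuples $(L_1^{(i)},\ldots,L_m^{(i)})$ and the pairs $(L^{(i)},P^{(i)})$.

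Next I would show that within any such small perturbation neighborhood one can pick tuples and pairs that arise from rational data. For the $Y_i$, a rational line-hyperplane pair in $\mathbb{P}^{m-1}$ is exactly the eigenspace datum of a diagonalizable $\rho\in\GL_m\Q$ with eigenvalues $(-1,\ldots,-1,1)$, and such pairs are dense among all line-hyperplane pairs, so each $Y_i$ can be replaced by a nearby rational $Y_i'$ whose quotient is properly immersed in $H/\Gamma$. For the $X_i$, the key input is the existence of a single $\tau_0\in\SL_m\Q$ with $m$ distinct real eigenvalues and irreducible characteristic polynomial (standard, e.g.\ by choosing a polynomial with $m$ real roots and applying an irreducibility criterion such as Eisenstein at a suitable prime, then taking its companion matrix, adjusted to lie in $\SL_m\Z$). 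Then the $\SL_m\Q$-orbit of the eigenspace tuple of $\tau_0$ is dense in the space of transverse $m$-tuples of lines, and every element of this orbit is the eigenspace tuple of a rational conjugate of $\tau_0$, which still has irreducible characteristic polynomial; hence each $X_i$ admits arbitrarily small perturbations to a rational $X_i'$ whose quotient is compact by Proposition~\ref{compact}.

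Choosing the perturbations small enough that general position is preserved and that all required open conditions from Proposition~\ref{projective2} remain true, the resulting rational configuration $\{X_i',Y_i'\}_{i=1}^N$ satisfies $X_i'\cap Y_j'\neq\emptyset$ if and only if $i\leq j$. Finally, transversality of these intersections is automatic: since general position is preserved, the spheres at infinity $\partial X_i'$ and $\partial Y_j'$ remain disjoint, and the linking-implies-intersection argument sketched before Proposition~\ref{projective2} shows that any such intersection in $H$ must be transverse. The only step that required real work was producing one rational $\tau_0$ with the needed algebraic properties; once that single existence statement is in hand, everything else is a combination of openness and density.
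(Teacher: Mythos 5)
Your proposal is correct and follows essentially the same route as the paper: both rely on the stability of the linking criterion of Proposition~\ref{projective2} under small perturbations together with the density of the $\SL_m\mathbb Q$-orbit of eigenspace data (for the $X_i$) and of rational line-hyperplane pairs (for the $Y_i$). You add a bit more explicit detail (the companion-matrix construction of $\tau_0$ and the remark that transversality follows from disjointness of spheres at infinity), but the substance is identical.
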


\section{Pushing intersection patterns down to a congruence cover}
The goal of this section is to show that the intersection pattern described in
Proposition \ref{rationalintersectionpattern} can be pushed down to appropriate
congruence covers. Once this is done, we will be able to conclude that the submanifolds involved in the intersection pattern give nontrivial
homology cycles in those congruence covers. Our method is to ``make intersections more similar to each other'' by passing to congruence covers.  
\begin{thm}
\label{covers}
Let $p$ be a prime. Suppose that $\tau\in \SL_m\mathbb Q$ is a matrix with
$m$ distinct real eigenvalues and irreducible characteristic polynomial,
while $\rho\in \GL_m\mathbb Q$ is a diagonalizable matrix with eigenvalues $(-1,\dots,-1,1).$
Let $X$ and $Y$ be the minsets of $\tau$ and $\rho,$ respectively.
Suppose further that we have the following ``genericity'' condition
\begin{itemize}
\item[($t$)]
\label{generic}
The only solutions to the system of equations $\rho x=x\rho,\tau x=x\tau$ are the scalar matrices\footnote{Note that this is a linear system of equations with rational coefficients, so it is equivalent over $\mathbb R,\mathbb Q$ or $\mathbb Q_p$.}. 
\end{itemize}
Then
\begin{enumerate}
\item 
If $X$ and $Y$ intersect transversely in a single point $z\in H,$
then for sufficiently large $n$ the quotients $X/\Gamma_X(p^n)$ and $Y/\Gamma_Y(p^n)$ are embedded, orientable submanifolds,
all their intersections in $H/\Gamma(p^n)$ are transverse and have the same sign. 
\item
If $X$ and $Y$ are disjoint then for sufficiently large $n$ the quotients $X/\Gamma_X(p^n)$ and $Y/\Gamma_Y(p^n)$
are disjoint in $H/\Gamma(p^n).$ 
\end{enumerate}
\end{thm}

\section{Proof of theorem \ref{covers}}
Proposition \ref{reduction} shows that, replacing $\Gamma$ by a sufficiently deep congruence subgroup $\Gamma(p^n)$ if necessary,
we may assume $\Gamma X$ is a disjoint union of copies $X$, equipped with a $\Gamma$-invariant
orientation (so that if $X^+$ is the oriented flat then $(\gamma X)^+=\gamma X^+$
for all $\gamma\in\Gamma$) and $\Gamma_X\subset C_{\tau}(\mathbb R),$ i.e. all elements
of $\Gamma$ preserving the flat $X$ commute with the element $\tau.$ Similarly for the subspace $Y$.
Then, the quotient $X/\Gamma_X$ is a compact oriented submanifold of $H/\Gamma$ and $Y/\Gamma_Y$ is a closed oriented submanifold of $H/\Gamma$. The intersection of these two submanifolds has finitely many components, and each of these components corresponds to a $\Gamma_Y$-orbit of components in $\Gamma X\cap Y$. Our goal in this proof will be to show that if we replace $\Gamma$ by a sufficiently deep congruence subgroup $\Gamma(p^n)$, then all the intersections $\Gamma(p^n)X\cap Y$ have a very special type. 
\subsection*{Claim}
There is $n$ (depending only on the pair of elements $\tau,\rho\in\SL_m\mathbb Q$ and the prime $p$) such that for any $\gamma\in\Gamma(p^n),$ either 
\begin{itemize}
\item
$\gamma X$ and $Y$ don't intersect, or 
\item
$\gamma$ factors as a product $\gamma=ab$ where $a\in C_{\rho}(\mathbb Q)$ preserves $Y$ and the orientation on $Y$ and $b\in C_{\tau}(\mathbb Q)$ preserves $X$ and the orientation on $X$. 
\end{itemize}
Given this claim, we get
$$
\gamma X^+\cap Y^+=abX^+\cap Y^+=aX^+\cap Y^+=a(X^+\cap a^{-1}Y^+)=a(X^+\cap Y^+).
$$
This means all the intersections are transverse and have the same sign as the intersection
$X^+\cap Y^+$ (since $a(X^+\cap Y^+)$ is a shifted version of the transverse intersection $X^+\cap Y^+$ and the shift $a\in \SL_m\mathbb Q$ preserves the orientation of the entire symmetric space $H$.) This implies the first part of the theorem.
Further, it means that if $X$ and $Y$ do not intersect, then $\gamma X$ and $Y$ do not intersect.
This gives the second part of the theorem.

We prove the claim in four steps.
\begin{enumerate}
\item(Reduction to $p$-adic centralizers)
There is an $n$ (depending on the pair $\tau,\rho$ and the prime $p$) such that for any $\gamma\in\Gamma(p^n)$ if $\gamma X$ and $Y$ intersect then $\gamma$ can be expressed as a product $\gamma=a'b'$ where $a'\in C_{\rho}(\mathbb Z_p)$ and $b'\in C_{\tau}(\mathbb Z_p)$.
\item(A linear algebraic lemma)
There are rational matrices $a,b\in\GL_m\mathbb Q$ and a scalar $c\in\mathbb Q_p$ such that $a=ca'$ and $b=c^{-1}b'$.
\item(Product lemma)
The product of centralizers $C_{\rho}(\mathbb Z_p)C_{\tau}(\mathbb Z_p)$ is finitely covered by the Cartesian product $C_{\rho}(\mathbb Z_p)\times C_{\tau}(\mathbb Z_p)$.
\item
If $n$ was taken to be sufficiently large (again, depending only on $\tau,\rho$ and $p$) then for any $\gamma\in\Gamma(p^n)$ there are rational matrices of determinant one $a\in C_{\rho}(\mathbb Q)$ and $b\in C_{\tau}(\mathbb Q)$, such that $\gamma=ab$, $a$ preserves the orientation of $Y$ and $b$ preserves the orientation of $X$.
\end{enumerate}
Recall that the quotient $X/\Gamma_X$ is compact (by Proposition \ref{compact}), so the intersection of $X/\Gamma_X$ and $Y/\Gamma_Y$ in $H/\Gamma$ has 
finitely many components. Consequently, the flats in $\Gamma X$ that intersect $Y$ break up into finitely many $\Gamma_Y$-orbits which we will denote by
$\{\Gamma_Y\gamma_iX\}_{i=0}^k, \gamma_i\in\SL_m\mathbb Z.$ 

\subsection*{Step 1}
We begin with several remarks about the $p$-adic centralizers $C_{\rho}(\mathbb Z_p)$ and $C_{\tau}(\mathbb Z_p)$. Roughly, one can think of these centralizers as the ``$p$-adically closed versions of $\Gamma_Y$ and $\Gamma_X$'', respectively. 
\begin{itemize}
\item
The centralizers $C_{\tau}(\mathbb Z_p)$ and $C_{\tau}(\mathbb Z_p)$ are compact in the $p$-adic topology, so their product $C_{\rho}(\mathbb Z_p)C_{\tau}(\mathbb Z_p)$ is also compact. Moreover, the groups $\Gamma_X$ and $\Gamma_Y$ consist of integer matrices that commute with $\tau$ and $\rho$, respectively, so $\Gamma_Y\Gamma_X$ is contained in $C_{\rho}(\mathbb Z_p)C_{\tau}(\mathbb Z_p)$. 
\item
If an element $\gamma\in\SL_m\mathbb Z$ is not contained in the closed set $C_{\rho}(\mathbb Z_p)C_{\tau}(\mathbb Z_p)$ then a small enough $p$-adic neighborhood of $\gamma$ misses this closed set, and hence also misses the product $\Gamma_Y\Gamma_X$. In other words, $\Gamma(p^n)\gamma\cap\Gamma_Y\Gamma_X=\emptyset$ for large enough $n$.
\end{itemize}
\begin{lem}
If $\Gamma(p^n)\gamma\cap\Gamma_Y\Gamma_X=\emptyset$ then none of the flats $\Gamma_Y\gamma X$ occur in $\Gamma(p^n)X$.
\end{lem}
\begin{proof}
Suppose there are elements $a\in\Gamma_Y$ and $c\in\Gamma(p^n)$ such that $a\gamma X=cX$. Then $a(a^{-1}c^{-1}a)\gamma X=X$ which means $a(a^{-1}c^{-1}a)\gamma=:b$ is an element of $\Gamma_X$. The equation $(a^{-1}c^{-1}a)\gamma=a^{-1}b$ shows that $\Gamma(p^n)\gamma\cap\Gamma_Y\Gamma_X\not=\emptyset$, which is a contradiction. 
\end{proof}
Now, pick an $n$ so that for every $\gamma_i\notin C_{\rho}(\mathbb Z_p)C_{\tau}(\mathbb Z_p)$ we have 
$\Gamma(p^n)\gamma_i\cap\Gamma_Y\Gamma_X=\emptyset$. If $\gamma\in\Gamma(p^n)$ and the flat $\gamma X$ intersects $Y$, then it must have the form $a\gamma_iX$ with $a\in\Gamma_Y$ and $\gamma_i\in C_{\rho}(\mathbb Z_p)C_{\tau}(\mathbb Z_p)$. So $\gamma=a\gamma_ib$ for some $b\in\Gamma_X$, which implies that $\gamma$ is in the product of centralizers $C_{\rho}(\mathbb Z_p)C_{\tau}(\mathbb Z_p)$. This completes the proof of Step 1. 

\begin{remark}
It is easy to check that $\Gamma(p^n)\gamma\cap\Gamma_Y\Gamma_X=\emptyset \iff \Gamma(p^n)\cap\Gamma_Y\gamma\Gamma_X=\emptyset$.
So we get the following interpretation: If the double coset $\Gamma_Y\gamma\Gamma_X$ misses a $p$-adic neighborhood of the identity $\Gamma(p^n)$, then 
the flats $\Gamma_Y\gamma X$ do not appear in $\Gamma(p^n)X$. We will use this later in Step 4. 
\end{remark}

\begin{figure}
\label{padic}
\centering
\includegraphics[scale=0.75]{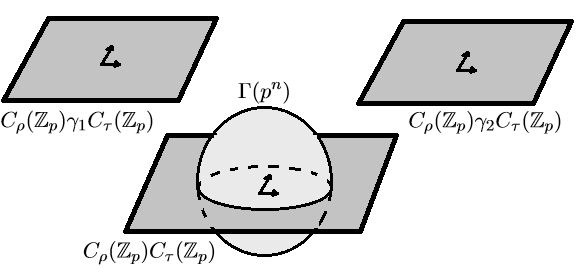}
\caption{}
\end{figure}

Above in Figure 5 is a (very) heuristic $p$-adic picture illustrating Step 1. The double cosets $\{C_{\rho}(\mathbb Z_p)\gamma_iC_{\tau}(\mathbb Z_p)\}_{i=0}^k$ are all compact and locally look like products\footnote{We will prove this local product structure in Step 3 and use it in Step 4.}. A small enough $p$-adic neighborhood of the identity $\Gamma(p^n)$ intersects $C_{\rho}(\mathbb Z_p)C_{\tau}(\mathbb Z_p)$ and does not meet any of the other (finitely many) non-identity double cosets.

\subsection*{Step 2} The following linear algebraic lemma lets us go from matrices over $\mathbb Z_p$ back to ordinary rational matrices (over $\mathbb Q$), at the expense of allowing the determinants to vary.
\begin{lem}
\label{ptoq}
If $\gamma=a'b'$ where $a'\in C_{\rho}(\mathbb Z_p)$ and $b'\in C_{\tau}(\mathbb Z_p)$ then there are rational matrices $a,b\in \GL_m\mathbb Q$ and a scalar $c\in\mathbb Q_p$ such that $a=ca',b=c^{-1}b'$.  
\end{lem}
\begin{proof}
The element $a'$ satisfies the equations $[a',\tau]=[\gamma,\tau]$ and $[a',\rho]=1.$
These equations can be rewritten as
\begin{eqnarray}
\label{linear1}
a'\tau&=&[\gamma,\tau]\tau a',\\
\label{linear2}
a'\rho&=&\rho a'.
\end{eqnarray}
These equations are linear homogeneous in the matrix entries of $a'$ and are defined over $\mathbb Q$ (since 
$\gamma,\tau,$ and $\rho$ are defined over $\mathbb Q$). If they have a $p$-adic solution $a'$
of determinant one, then they also have a rational solution $a$ with non-zero determinant. Moreover, it is easy to check that $a^{-1}a'$ commutes with both $\rho$ and $\tau$, so by the genericity condition ($t$) 
it is a scalar matrix\footnote{In other words, the genericity condition implies the vector space of solutions is one-dimensional.}, which means $a=ca'$ for some constant $c\in\mathbb Q_p$. 
Take $b=a^{-1}\gamma.$ 
\end{proof}

\subsection*{Step 3}
In this step we relate the product of centralizers to their Cartesian product. This will make it a bit easier to separate a double coset $\Gamma_Y\gamma_i\Gamma_X$ from a small $p$-adic neighborhood of the identity in the next (last) step.

Let $\mu:=\{\nu\in\mathbb Q_p\mid\nu^m=1\}$ be the group of $m$-th roots of unity in $\mathbb Q_p$. Since these roots of unity have finite order, they all actually lie in $\mathbb Z_p$. The genericity condition ($t$) 
says the only matrices that commute with both $\rho$ and $\tau$ are the scalar matrices. These matrices have determinant one precisely if the scalar is an $m$-th root of unity. In other words, the intersection of centralizers $C_{\rho}(\mathbb Z_p)\cap C_{\tau}(\mathbb Z_p)$ is the group $\mu$.
\begin{lem}
The map 
\begin{eqnarray*}
C_{\rho}(\mathbb Z_p)\times C_{\tau}(\mathbb Z_p)&\ra&C_{\rho}(\mathbb Z_p)C_{\tau}(\mathbb Z_p),\\
(x,y)&\mapsto& xy,
\end{eqnarray*}
is a finite sheeted regular cover with covering group $\mu$ acting via $\nu\cdot(x,y)=(\nu x,\nu^{-1}y)$.
\end{lem}
\begin{proof}
Condition ($t$) implies that the map in the statement of the lemma is finite-to-one and its fibers are the $\mu$-orbits. Quotienting out by $\mu$, we get a map 
$$
(C_{\rho}(\mathbb Z_p)\times C_{\tau}(\mathbb Z_p))/\mu\ra C_{\rho}(\mathbb Z_p)C_{\tau}(\mathbb Z_p).
$$ 
This map is a continuous bijection of compact Hausdorff spaces, so it is a homeomorphism\footnote{But not a group homomorphism. The right hand side isn't even a group.}.
On the other hand, the group $\mu$ acts on the Cartesian product $C_{\rho}(\mathbb Z_p)\times C_{\tau}(\mathbb Z_p)$ by covering translations\footnote{Officially, it acts {\it freely, properly discontinuously}.} so $C_{\rho}(\mathbb Z_p)\times C_{\tau}(\mathbb Z_p)\ra (C_{\rho}(\mathbb Z_p)\times C_{\tau}(\mathbb Z_p))/\mu$ is a finite sheeted regular cover.
\end{proof} 

\subsection*{Step 4}The following lemma completes the proof of the main claim. The main point of the lemma is to look at how matrices commuting with $\rho$ act on the $1$-eigenspace $L$ of $\rho$ and throw away the intersection by passing to a finite cover if $L$ gets rescaled in a non-trivial way. 
\begin{lem}
\label{orient}
For sufficiently large $n$, all the flats in $\Gamma(p^n)X$ which intersect $Y$ 
are of the form  
$abX$ where $a\in C_{\rho}(\mathbb Q)$, $b\in C_{\tau}(\mathbb Q),$
$a$ preserves the orientation of $Y$ and $b$ preserves the orientation of $X.$
\end{lem}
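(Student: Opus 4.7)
My plan is to combine the preceding bullet with Lemma \ref{ptoq} to produce the decomposition for each flat in $\Gamma(p^n) X$ intersecting $Y$, and then verify the two orientation conditions. Let $\gamma X$ be such a flat, so $\gamma \in \Gamma(p^n)$; the preceding bullet guarantees $\gamma \in C_\rho(\Z_p) C_\tau(\Z_p)$ for $n$ sufficiently large, and Lemma \ref{ptoq} then produces rational $a \in C_\rho(\Q)$ and $b \in C_\tau(\Q)$ with $\gamma = a b$. Since $b \in C_\tau$ stabilizes $X$, the flat is indeed $\gamma X = a b X$.

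The orientation condition on $b$ is automatic: as recalled earlier, $C_\tau(\R) \cong (\R^*)^{m-1}$ acts on its minset $X$ by orientation-preserving isometries, so any $b \in C_\tau(\R)$ preserves the chosen orientation on $X$. The orientation condition on $a$ is also automatic when $m$ is even, since then the entire centralizer $C_\rho(\R)$ preserves the orientation of $Y$. For $m$ odd, orientation preservation of $a$ on $Y$ is equivalent to $a$ lying in the identity component of $C_\rho(\R) \cap \SL_m(\R)$, captured by the sign of $\det(a\mid_P)$ (equivalently of $a\mid_L$ after rescaling $a$ to determinant one).

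If Lemma \ref{ptoq} happens to produce an $a$ in the wrong component for some flat (in the case $m$ odd), I would try to correct this by absorbing an orientation-reversing element $\sigma \in \Gamma_Y$ into the decomposition: the rewritten expression $\sigma\gamma = (\sigma a)\, b$ puts $\sigma a \in C_\rho(\Q)$ in the identity component, and such $\sigma$ exist in $\Gamma_Y$ because the latter is an arithmetic subgroup of $C_\rho \cong \GL_{m-1}$ whose component group is detected by integer matrices. The main obstacle, and the step I expect to be subtlest, is that $\sigma \gamma$ generally leaves $\Gamma(p^n)$, so one has to check that the flat $\sigma \gamma X$ remains in $\Gamma(p^n) X$ after a compensating adjustment by $\Gamma_X(p^n) \subset C_\tau$. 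A cleaner alternative, which I suspect is the intended route, is to argue directly that for $n$ sufficiently large the integrality of $\gamma$ (so that $\gamma v \equiv v \pmod{p^n}$ for a generator $v$ of $L_\Z$), combined with the $p$-adic continuity of the Iwasawa-style decomposition $\gamma \mapsto (a,b)$ at the identity, forces the Archimedean sign $\det(a\mid_P) > 0$ and thus places $a$ in the identity component. Bridging the $p$-adic smallness of $a - I$ with the real-valued sign of $\det(a\mid_P)$ is the essential technical difficulty.
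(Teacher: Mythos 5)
Your reduction to Lemma \ref{ptoq} and your observation that the orientation on $X$ is automatic and that the subtlety lies entirely in the orientation on $Y$ for $m$ odd are correct, and they match the paper's setup. But neither of the two fixes you sketch for that remaining case closes the gap, and you acknowledge as much. The first (absorbing an orientation-reversing $\sigma\in\Gamma_Y$) fails because, as you note, $\sigma\gamma$ leaves $\Gamma(p^n)$. The second (the ``cleaner alternative'') is not merely technically difficult to make precise---it is the wrong mechanism: the $p$-adic smallness of $\gamma-I$ simply does not constrain the Archimedean sign of $\det(a\mid_P)$, because the rational factor $a$ produced by Lemma \ref{ptoq} is not itself $p$-adically small (it differs from the $p$-adically small $a'\in C_\rho(\Z_p)$ by a scalar). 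There is no $p$-adic-to-real ``sign transfer'' argument to be had here, and hunting for one is a dead end.

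What the paper actually does is different in an essential way. Using $C_\rho(\R)\cap C_\tau(\R)=1$, it identifies the $p$-adic and rational decompositions up to a scalar: $a'=a\cdot(\det a)^{-1/m}$, where $(\det a)^{-1/m}$ is the relevant $p$-adic $m$-th root. Writing $av=kv$ with $k\in\Q$, it then considers the $p$-adic number $k\,(\det a)^{-1/m}$. If this is $\neq 1$ in $\Q_p$, then $\Gamma_Y a'v=k(\det a)^{-1/m}v$ stays a fixed positive $p$-adic distance from $v$, so $\Gamma_Ya'$ misses a small enough $p$-adic identity neighborhood, and the \emph{entire double coset} $\Gamma_Y\gamma_i\Gamma_X$ can be eliminated by deepening the congruence level. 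This uses crucially that there are only finitely many double cosets to worry about (a fact you never invoke), so one $n$ works for all the bad ones. For the cosets that survive, $k(\det a)^{-1/m}=1$ forces $k^m=\det a$ with $k\in\Q$; then the rescaled element $a/k$ is rational, has determinant $1$, and satisfies $(a/k)v=v$, so it lies in the identity component of $C_\rho(\R)\cap\SL_m\R$ and preserves the orientation of $Y$ --- with $kb$ the compensating orientation-preserving element of $C_\tau(\Q)$. So the positivity you were trying to extract from $p$-adic continuity is instead an arithmetic byproduct of the rescaling, available only after the problematic cosets have been removed; the missing idea in your proposal is the finite-double-coset elimination step.
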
 
\begin{proof}
For $\gamma_i\in C_{\rho}(\mathbb Z_p)C_{\tau}(\mathbb Z_p),$ let $\gamma_i=ab=a'b',$ and $a=ca'$ as in linear algebra lemma, and look at how the rational matrix $a$ acts on the line $L.$
Pick a non-zero vector $0\not= v\in L_{\mathbb Q}$ and note that $av=kv$ for some $k\in\mathbb Q.$
Also, recall that the group $\Gamma_Y$ consists of integer matrices preserving
the orientation of $Y$, so that $\Gamma_Yv=v.$  
Thus 
$$
\Gamma_Ya'v=\Gamma_Yac^{-1}v=kc^{-1}v.
$$

\begin{itemize}
\item
If $kc^{-1}\notin\mu$ then $\Gamma_Ya'\cap \mu U(p^n)=\emptyset$ for a sufficiently small,
$p$-adic neighborhood $\mu U(p^n)$ of the group $\mu$.
Thus, the product $\Gamma_Ya'\times b'\Gamma_X$ misses the $\mu$-invariant neighborhood of the identity $\mu U(p^n)\times C_{\tau}(\mathbb Z_p)$ in the Cartesian product $C_{\rho}(\mathbb Z_p)\times C_{\tau}(\mathbb Z_p)$ and consequently its image $\Gamma_Ya'b'\Gamma_X=\Gamma_Y\gamma_i\Gamma_X$ misses the 
neighborhood of the identity $U(p^n)C_{\tau}(\mathbb Z_p)$ in the product $C_{\rho}(\mathbb Z_p)C_{\tau}(\mathbb Z_p).$
This means $\Gamma(p^n)\cap\Gamma_Y\gamma_i\Gamma_X=\emptyset$ for sufficiently large $n$. For such $n$, the flats $\Gamma_Y\gamma_iX$ do not appear in $\Gamma(p^n)X$. 
\item
On the other hand, if $kc^{-1}=\nu$ for some $m$-th root of unity $\nu\in\mathbb Q_p$ then 
$$
{a\over k}={ca'\over c\nu}={a'\over \nu}
$$
is a rational matrix with determinant $\det(a'/\nu)=1$. In other words, $a/k\in C_{\rho}(\mathbb Q)$. Similarly, $kb\in C_{\tau}(\mathbb Q)$. Moreover $a/k$ preserves the orientation of $Y$ because $(a/k)v=v$ and $kb$ preserves the orientation of $X$ since it lies in the centralizer of $\tau$.
\end{itemize}
In conclusion, since there are only finitely many $\gamma_i$, we can pick $n$ sufficiently large so that every flat in $\Gamma(p^n)X$ that intersects $Y$ is of the form described in the statement of the Lemma. This completes the proof of the Lemma and thus also the proof of the main Claim.
\end{proof}

\section*{Proof of Theorem \ref{maintheorem}}
The condition ($t$) 
saying that the only solutions to $x\tau=\tau x,x\rho=\rho x$ are scalar matrices is satisfied whenever
the collection of eigenspaces $(L_1,\dots,L_m)$ of $\tau$ and the line-hyperplane pair $(L,P)$ of $\rho$ 
are in general position. 

Let $X_1,\dots,X_N$ and $Y_1,\dots, Y_N$ be the subspaces obtained in Proposition \ref{rationalintersectionpattern}.
The eigenspaces of $X_i$ and line-hyperplane pairs of $Y_j$ are in general position, so we can apply
Theorem \ref{covers} and find $n_0$ such that for $n\geq n_0$ the quotients $\overline{X_i}=X_i/\Gamma_{X_i}(p^n)$
and $\overline{Y_i}=Y_i/\Gamma_{Y_i}(p^n)$ 
\begin{itemize}
\item
are embedded orientable manifolds, 
\item
$\overline{X_i}$ and $\overline{Y_i}$
intersect
\item
all the intersections of $\overline{X_i}$ and $\overline{Y_i}$
are transverse and have the same sign,
\item
$\overline{X_i}$ and $\overline{Y_j}$ do not intersect for $i>j.$ 
\end{itemize}
This means the intersection matrix is upper triangular with non-zero diagonal entries.
Consequently, it is a non-degenerate $N\times N$ matrix, which means  
the flats $\overline{X_i}$ span an $N$-dimensional subspace of $H_{m-1}(H/\Gamma(p^n);\mathbb Q).$
\section{Questions}
We end the paper by mentioning some questions related to the results of this paper.
\begin{itemize}
\item(Other symmetric spaces)
Maximal periodic flats can be found in any locally symmetric space. For which symmetric spaces do they give non-trivial rational homology classes?
An argument analogous to the one presented in this paper can be performed for Hilbert modular surfaces, and we expect that there is a common generalization to lattices of the form $\SL_m\mathcal O_K$, where $K$ is a totally real number field. On the other hand, some lattices in complex hyperbolic (hence real rank one) spaces have Property (T), and for these the maximal tori are closed geodesics that do not give rationally nontrivial cycles (since Property (T) implies $H_1$ is torsion.)   
\item(Uniform lattices in $\SL_3$)
There are uniform lattices in $\SL_3$ (associated to units of division algebras) for which the only properly immersed totally geodesic subspaces are flat tori. For these, there are no candidate complementary totally geodesic subspaces. Do the $2$-tori in these lattices give torsion homology classes?
\item(Rates of growth)
How fast does the subspace of (rational) homology generated by maximal flat tori grow in congruence covers? This question is especially interesting for Hilbert modular surfaces: in this case, the second homology grow linearly in the degree of the cover. Does the space of $2$-tori also grow linearly?
\item(Peripheral cycles)
Do the maximal tori give homology cycles that come from the end? If one can find complementary {\it compact} cycles that intersect these tori non-trivially in homology, then the answer is no. In Hilbert modular surfaces, one can intersect with complementary $2$-tori, but already for the lattice $\SL_3\mathbb Z$ this question is open. 
\end{itemize}

\bibliography{locsym}
\bibliographystyle{alpha}

\end{document}